\title{On Levi subgroups and the Levi decomposition for groups definable in $o$-minimal structures} 
\date{November 7, 2011}
\author{Annalisa Conversano\\ \and Anand Pillay\thanks{Supported by EPSRC grant EP/I002294/1}\\University of Leeds}
\newtheorem{Theorem}{Theorem}[section]
\newtheorem{Proposition}[Theorem]{Proposition}
\newtheorem{Definition}[Theorem]{Definition} 
\newtheorem{Remark}[Theorem]{Remark}
\newtheorem{Lemma}[Theorem]{Lemma}
\newtheorem{Fact}[Theorem]{Fact}
\newcommand{\R}{\mathbb R}  
\newcommand{\Z}{\mathbb Z}
\renewcommand{\leq}{\leqslant}
\begin{document}
\maketitle

\begin{abstract} We study analogues of the notions from Lie theory of Levi subgroup and Levi decomposition, in the case of groups $G$ definable in an $o$-minimal expansion of a real closed field. With suitable definitions, we prove that $G$ has a unique maximal {\em ind-definable semisimple} subgroup $S$, up to conjugacy, and that $G = R\cdot S$ where $R$ is the solvable radical of $G$. We also prove that any semisimple subalgebra of the Lie algebra of $G$ corresponds to a unique ind-definable semisimple subgroup of $G$. 
\end{abstract}

\section{Introduction and preliminaries}

The ``Levi-Mal'cev" theorem  sometimes refers to  Lie algebras (over any field of characteristic $0$) and sometimes to Lie groups. 
For Lie algebras $L$ it says that $L$ is the semidirect product of a solvable ideal $\mathfrak{r}$ and a semisimple subalgebra $\mathfrak s$ (with certain uniqueness properties) and, as such, is valid for Lie algebras over real closed fields. $\mathfrak{s}$ is sometimes called a Levi factor of $L$. For connected Lie groups $G$ it says that $G$ has a unique, up to conjugacy,  maximal connected semisimple Lie subgroup $S$, and for any such $S$, $G = R\cdot S$ where $R$ is the solvable radical (maximal connected solvable normal subgroup). And of course $R\cap S$ is $0$-dimensional. $S$ need not be closed, but when $G$ is simply connected, $S$ {\em is} closed and $R\cap S = \{1\}$ (so $G$ is the semidirect product of $R$ and $S$). $S$ is sometimes called a Levi subgroup of $G$. See Theorems 3.14.1, 3.14.2 and 3.18.13 of \cite{Varadarajan} for example. We also refer to the latter book for basic facts and definitions concerning Lie algebras and Lie groups.\\

In this paper we are concerned with $G$  a (definably connected) group definable in an $o$-minimal expansion $M$ of a real closed field $K$, so this goes outside the Lie group context unless $K = \R$. We are interested not only in the existence of a ``Levi" subgroup and decomposition of the group $G$ but also in definability properties. Even in the case where $M = 
(\R,+,\cdot)$ and so $G$ is a {\em Nash group}  (semialgebraic Lie group), this is a nontrivial issue and $S$ need not be semialgebraic, as pointed out in the first author's thesis (see also Example 2.10 of \cite{CCI}).  
In the general situation $G$ will have a Lie algebra $L(G)$ (over the relevant real closed field $K$) which has its own Levi decomposition as a sum of a solvable ideal and a semisimple algebra, so the issue is what kind of subgroup of $G$, if any, corresponds to the semisimple subalgebra, and also to what extent it is unique. 

We will be forced into the category of ``$ind$-definable" subgroups, i.e. defined by a possibly infinite, but countable, disjunction of formulas. We will give an appropriate (strong) definition of an {\em  ind-definable semisimple subgroup} of $G$, and in analogy with the classical case, prove the existence and uniqueness up to conjugacy of a maximal such ind-definable semisimple subgroup $S$ of $G$, as well as that $G = R\cdot S$ where $R$ is the solvable radical of $G$. Definability of $S$ in this general context corresponds more or less to $S$ being a closed subgroup of $G$ in the classical context. As remarked above, the  first example of nondefinability of $S$ was given in \cite{CCI}. We will also give a number of situations where $S$ is definable.   \\

We now aim towards stating formally the main result.
We assume $M$ to be an $o$-minimal expansion of a real closed field, and $G$ to be a definably connected definable group in $M$. $G$ has a unique maximal definably connected solvable subgroup which we call $R$. We denote the quotient $G/R$ by $P$, a definable (equivalently interpretable) group which is definably connected and {\em semisimple} in the sense that it has no infinite normal solvable (equivalently abelian) subgroups, or equivalently the quotient of $P$ by its finite centre is a direct product of finitely many definably simple (noncommutative) definable groups. We sometimes call $P$ the semisimple part of $G$. \\

A crucial notion in this paper is that of an {\em ind-definable semisimple} group (or subgroup of a given group) $S$. The meaning is that $S$ is ind-definable, (locally) definably connected, has a ``discrete" centre $Z(S)$ and the quotient is a definable semisimple group. The notions will be explained in section 2.

\begin{Theorem} $G$ has a maximal ind-definable semisimple subgroup $S$, unique up to conjugacy in $G$.
Moreover 
\newline 
(i) $G = R\cdot S$,
\newline
(ii) The centre of $S$, $Z(S)$,  is finitely generated and contains $R\cap S$. 
\end{Theorem}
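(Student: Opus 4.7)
The plan is to exploit the classical Levi-Mal'cev decomposition of the Lie algebra $L(G)$ (which is a Lie algebra over the real closed field $K$, so the characteristic-zero Levi-Mal'cev theorem applies) together with an o-minimal version of the Lie correspondence between Lie subalgebras and ind-definable subgroups generated by one-parameter subgroups. Write $L(G) = \mathfrak{r} \rtimes \mathfrak{s}$, where $\mathfrak{r} = L(R)$ is the solvable radical and $\mathfrak{s}$ is a semisimple Levi subalgebra, chosen so that its image in $L(P) = L(G)/\mathfrak{r}$ is an isomorphism onto $L(P)$.

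I would construct $S$ as the ind-definable subgroup of $G$ generated by the one-parameter subgroups $\exp(tX)$ for $X$ ranging over $\mathfrak{s}$. Each ``bounded word length'' piece is definable, so $S$ is genuinely ind-definable. To check $S$ is semisimple in the paper's sense, I would show that the projection $G \to P$ restricted to $S$ has image a definably connected subgroup whose Lie algebra is all of $L(P)$; by the o-minimal Lie correspondence this image is all of $P$. Hence $S$ surjects onto $P$ with kernel $S \cap R$, giving both $G = R \cdot S$ and a presentation of $P$ as $S/(S\cap R)$. Showing that $S \cap R$ lies in $Z(S)$ is read off at the Lie algebra level: the adjoint action of $\mathfrak{s}$ on $L(S \cap R) \subseteq \mathfrak{r}$ factors through the action on $\mathfrak{s} \cap \mathfrak{r} = 0$, and upon integrating this gives trivial conjugation.

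For the uniqueness up to conjugacy of the maximal $S$, I would transfer the classical conjugacy of Levi subalgebras of $L(G)$ by inner automorphisms of the form $\exp(\operatorname{ad} X)$ with $X$ in the nilradical of $\mathfrak{r}$. Since such $X$ lie in (the Lie algebra of) a definable unipotent subgroup of $R$, the corresponding automorphism is realized by honest conjugation by an element of $R$. Any two maximal ind-definable semisimple subgroups $S, S'$ yield two Levi subalgebras of $L(G)$, which are conjugate by some $r \in R$; then $rS'r^{-1}$ and $S$ have equal Lie algebras and equal projections onto $P$, and a short argument on the ``discrete'' kernel forces them to coincide.

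The subtlest point, and the step I expect to be the main obstacle, is that $Z(S)$ is finitely generated. Here I would view $S$ as an ind-definable covering of the definable centreless semisimple group $S/Z(S)$, so that $Z(S)$ is a quotient of the o-minimal fundamental group of the latter. Finite generation then follows from the existing o-minimal structure theory of definably connected semisimple groups, where such fundamental groups are known to be finitely generated abelian. Putting these ingredients together yields all four conclusions of the theorem; the care needed is almost entirely in setting up the ind-definable Lie correspondence rigorously so that generation by one-parameter subgroups, projection to $P$, and the kernel $R \cap S$ all behave as in the Lie group case.
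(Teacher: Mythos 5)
Your construction of $S$ rests on one-parameter subgroups $\exp(tX)$ for $X\in\mathfrak{s}$, and this is where the argument breaks down: in an o-minimal expansion of an arbitrary real closed field $K$ there is no exponential map. $K$ need not be archimedean or complete, so one cannot integrate vector fields or sum the exponential series, and even over $\R$ in the pure field language the curve $t\mapsto\exp(tX)$ is typically not definable. Consequently your ``bounded word length pieces'' are not definable sets and the proposed ind-definable Lie correspondence does not exist in this generality. The paper's entire architecture is designed to route around exactly this obstacle: it first reduces to the case where $G/Z(G)^{0}$ is (almost) linear using Fact 1.5(iii), and in the linear case it integrates a semisimple subalgebra not via exponentials but via Fact 1.5(v) (semisimple subalgebras of $\mathfrak{gl}_{n}(K)$ are algebraic, so the corresponding group is semialgebraic); the remaining central-extension layer $\pi^{-1}(S_{1})\to S_{1}$ is handled purely group-theoretically by taking $S=[H,H]$ and using perfectness of semisimple groups (Fact 1.5(ii), Lemma 2.7), with discreteness of $[H,H]\cap Z(G)^{0}$ supplied by Lemma 1.1 of the companion paper. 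The same issue infects your centrality argument for $R\cap S$: since $R\cap S$ is discrete its Lie algebra is zero, so nothing can be ``integrated'' from the adjoint action; the correct argument is that a discrete normal subgroup of a definably connected ind-definable group is central (Lemma 2.4).

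Your uniqueness argument has a second, independent gap. Conjugating the two Levi \emph{subalgebras} by $\exp(\operatorname{ad}X)$ with $X$ nilpotent is fine over any field of characteristic zero, but passing from ``$rS'r^{-1}$ and $S$ have equal Lie algebras'' to $rS'r^{-1}=S$ is precisely the content of Theorem 1.2, which is itself a theorem of the paper requiring the full machinery (in particular the homogeneous-space argument of Lemma 4.1 showing $S\leq S_{1}$ when $L(S)\subseteq L(S_{1})$, and the graph argument of Lemma 4.3 forcing the covering kernel to be finite). An ind-definable semisimple subgroup is a priori some quotient of the o-minimal universal cover of its image in $P$, and two such subgroups with the same Lie algebra could conceivably sit inside $G$ as different covers; ruling this out is the hard part, and the paper does it by transfer to $\R$ and the classical Levi conjugacy theorem for Lie groups rather than at the Lie algebra level. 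Your treatment of the finite generation of $Z(S)$ via the o-minimal universal cover is the one point that matches the paper (Fact 2.9(ii) and Remark 2.10) and is correct as stated.
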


It will also follow from material in section 2 that if $\pi:G\to P$ is the canonical surjective homomorphism, then the (surjective) homomorphism from $S$ to $P$ induced by $\pi$ is a quotient of the $o$-minimal universal cover of $P$. 
We will call $S$ as in Theorem 1.1 an {\em ind-definable Levi subgroup} of $G$, and the decomposition of $G$ given by Theorem 1.1 the {\em ind-definable Levi decomposition} of $G$. When $G$ is a definable real Lie group this decomposition  coincides with the usual Levi decomposition of $G$ referred to earlier. Note that by uniqueness of the (ind-definable) Levi subgroup up to conjugacy, some Levi subgroup will  be definable iff all are. 
When $K = \R$ the examples of nondefinability of the Levi subgroup, given in \cite{CCI}, \cite{CCII} and \cite{Conversano-thesis},  come from encoding the universal cover of $P$ as an ind-definable but non definable subgroup of $G$, and for this to be possible $P$ has to have infinite ``fundamental group".\\

Our methods will also yield:
\begin{Theorem} Let ${\mathfrak s}$ be a semisimple Lie subalgebra of $L(G)$. Then there is a unique ind-definable semisimple subgroup $S$ of $G$ such that $\mathfrak{s} = L(S)$.
\end{Theorem}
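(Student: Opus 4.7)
The plan is to realise $S$ as the image in $G$ of an ind-definable semisimple group canonically associated to $\mathfrak{s}$, namely the o-minimal universal cover of the adjoint form $\operatorname{Int}(\mathfrak{s})$, and to derive uniqueness from the universal property of that cover.

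Concretely, since $\mathfrak{s}$ is a semisimple $K$-Lie algebra, the adjoint group $\operatorname{Int}(\mathfrak{s}) \leq \GL(\mathfrak{s})$ is a $K$-definable semisimple algebraic group whose Lie algebra is canonically identified with $\mathfrak{s}$. Let $\widetilde{A}$ denote its o-minimal universal cover, in the sense referred to after Theorem 1.1; this is an ind-definable semisimple group with $L(\widetilde{A}) = \mathfrak{s}$. By the universal property of $\widetilde{A}$, the inclusion $\iota: \mathfrak{s} \hookrightarrow L(G)$ integrates to a unique ind-definable homomorphism $\rho: \widetilde{A} \to G$ with $d\rho = \iota$. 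Set $S := \rho(\widetilde{A})$. This is an ind-definable subgroup of $G$, and since $d\rho$ is injective the kernel of $\rho$ has trivial Lie algebra and is therefore discrete; being normal in the (locally) definably connected $\widetilde{A}$, it is central. Hence $S$ is ind-definable semisimple with $L(S) = \mathfrak{s}$, proving existence.

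For uniqueness, let $S'$ be any ind-definable semisimple subgroup of $G$ with $L(S') = \mathfrak{s}$. Since the o-minimal universal cover is determined up to canonical isomorphism by the underlying semisimple Lie algebra, $S'$ is itself a quotient of $\widetilde{A}$ by a central (discrete) subgroup, so there is a surjective ind-definable homomorphism $q: \widetilde{A} \twoheadrightarrow S'$. The composition $\widetilde{A} \xrightarrow{q} S' \hookrightarrow G$ is an ind-definable homomorphism whose differential is $\iota$, and therefore it coincides with $\rho$ by uniqueness of the integrating homomorphism. Consequently $S' = \rho(\widetilde{A}) = S$.

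The main technical obstacle I anticipate is the underlying o-minimal Lie theory: establishing the o-minimal universal cover of an arbitrary definable semisimple group as an ind-definable semisimple group, and, more importantly, verifying the existence and uniqueness of ind-definable group homomorphisms integrating prescribed Lie algebra homomorphisms out of such covers. These are the o-minimal analogues of classical facts about simply connected Lie groups, but they must be invoked in the category of ind-definable groups over an arbitrary o-minimal expansion of a real closed field, which is precisely the setting developed in section 2 and used in the proof of Theorem 1.1.
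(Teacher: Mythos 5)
There is a genuine gap at the heart of your argument: the step where the inclusion $\iota:\mathfrak{s}\hookrightarrow L(G)$ ``integrates to a unique ind-definable homomorphism $\rho:\widetilde{A}\to G$ with $d\rho=\iota$'' is exactly the o-minimal analogue of Lie's second theorem for locally definable groups, and it is not available in this setting --- neither in the paper nor in the sources it cites. The universal property recorded in Fact 2.9 only concerns maps \emph{onto} a fixed definable group $G$ from its covers (elements of $Cov(G)$); it says nothing about producing homomorphisms \emph{out of} $\widetilde{A}$ into an unrelated group realizing a prescribed Lie algebra map. Indeed the paper explicitly remarks that even the weaker extension of the universal property to central extensions is merely ``expected'' and ``not required for the purposes of this paper.'' Over a general real closed field there is no exponential map or ODE theory, so the classical monodromy/Frobenius argument that integrates Lie algebra homomorphisms from simply connected groups does not transfer; the only integration result actually available is Fact 1.5(v), which produces a \emph{semialgebraic subgroup} of $GL_n(K)$ from a semisimple subalgebra of $\mathfrak{gl}_n(K)$. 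Your appeal to the integration theorem both for existence of $\rho$ and for uniqueness (identifying $q$ composed with the inclusion with $\rho$) therefore assumes something essentially as hard as the theorem itself. You flag this as the ``main technical obstacle,'' but it is not an obstacle to be smoothed over: it is the content.

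The paper's route avoids integration entirely. It passes to $G_{1}=G/Z(G)^{0}$, which is almost linear by Fact 1.5(iii), pushes $\mathfrak{s}$ forward to $\mathfrak{s}_{1}\leq L(G_{1})$, and uses the linear case (Lemma 4.3/4.4, ultimately resting on Fact 1.5(v) and the homogeneous-space differential argument from \cite{PPSI}) to get a unique \emph{definable} semisimple $S_{1}\leq G_{1}$ with $L(S_{1})=\mathfrak{s}_{1}$. It then sets $H=\pi^{-1}(S_{1})$, a central extension of $S_{1}$, and takes $S=[H,H]$, invoking Lemma 3.2 for existence and Lemma 2.7 (perfectness) plus definable connectedness for uniqueness. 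If you want to salvage your approach, you would first have to prove the integration theorem for ind-definable semisimple groups over an arbitrary o-minimal expansion of $RCF$ --- and the natural proof of that would itself go through the graph-subalgebra construction inside $\widetilde{A}\times G$, i.e., through a statement of the same shape as Theorem 1.2. So as written the argument is circular, or at best defers the entire difficulty to an uncited black box.
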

 
\vspace{2mm}
\noindent
In section 2 we discuss ind-definable groups, semisimplicity,  and universal covers. In sections 3 and 4 we will prove Theorem 1.1 (and Theorem 1.2) in some special cases, and then combine these in section 5 to give the proofs in general. At the end of section 5 we will list a number of hypotheses which imply definability of the Levi subgroups. In the remainder of this introduction we will recall some basic facts and notions. \\

Usually $M$ denotes an $o$-minimal expansion of a real closed field $K$ and $G$ a group definable in $M$. For various reasons, especially when dealing with ind-definable objects, we should bear in mind a saturated elementary extension ${\bar M}$ of $M$. We refer to earlier papers such as \cite{PPSI} for an account of the  general theory of definable sets and definable groups in $M$, as well as the existence and properties of tangent spaces and Lie algebras of definable groups. But we repeat that for any $k$ a definable group can be equipped with a (essentially unique) definable $C^{k}$-manifold structure over $K$ with respect to which the group operation is $C^{k}$. Likewise for definable homogeneous spaces. Definable connectedness of a definable group has two equivalent descriptions; no proper definable subgroup of finite index, and no proper open definable subgroup with respect to the topological structure referred to above. Definability means with parameters unless we say!
  otherwise.

\begin{Definition}  $G$ is {\em semisimple} if $G$ is definably connected and has no infinite normal abelian (definable) subgroup.
\end{Definition}

\begin{Remark} Assume $G$ definably connected. $G$ is semisimple if and only if $Z(G)$ is finite and $G/Z(G)$ is a direct product of finitely many definably simple, noncommutative, definable groups.
\end{Remark}

We now list some basic facts, from \cite{HPP}, \cite{OPP}, \cite{PPSI}, \cite{PPSIII},  which we will use:

\begin{Fact}
(i)  Assume $G$ is definably connected. Then $G$ has a unique maximal definable definably connected normal solvable subgroup $R$ and $G/R$ is semisimple.
\newline
(ii) If $G$ is semisimple then $G$ is perfect (i.e. $G$ equals its commutator subgroup $[G,G]$), and moreover for some $r$ every element of $G$ is a product of at most $r$ commutators.
\newline
(iii) If $G$ is definably connected, then $G/Z(G)$ is {\em linear}, namely definably embeds in some $GL_{n}(K)$.
\newline
(iv) Let $G$ be definably connected. Then $G$ is semisimple iff $L(G)$ is semisimple.
\newline
(v) If ${\mathfrak s}$ is a semisimple Lie subalgebra of $\mathfrak{gl}_{n}(K)$, then there is a (unique) definably connected definable subgroup $S$ of $GL_{n}(K)$ such that $\mathfrak{s} = L(S)$. Moreover $S$ is semialgebraic  (and semisimple by (iv)). 
\newline
(vi) If $G$ is definable, semisimple and centreless, then $G$ is definably isomorphic to a semialgebraic subgroup of some $GL_{n}(K)$ which is defined over $\R$ (in fact over $\Z$). 
\end{Fact}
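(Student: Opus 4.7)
The statement aggregates six facts, so I would prove them in turn using standard machinery: DCC on definable subgroups, the Lie algebra functor $L(-)$ (exact on short exact sequences), the exponential map on a neighborhood of the identity, and the adjoint representation. For (i), if $H_1, H_2 \trianglelefteq G$ are definable, definably connected, and solvable, then $H_1 H_2$ is again such (extension of solvable by solvable, continuous image of a definably connected group), and DCC produces a maximal such $R$; a normal infinite solvable definably connected subgroup of $G/R$ would pull back to contradict maximality, giving semisimplicity of $G/R$. For (iv), $L(R)$ is a solvable ideal of $L(G)$ by exactness, and conversely any solvable ideal of $L(G)$ integrates to a solvable definably connected normal subgroup by assembling its flows via $\exp$; thus $G$ is semisimple iff $R$ is finite iff $L(G)$ is semisimple. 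For (iii), the adjoint map $\mathrm{Ad}:G \to \GL(L(G))$ is a definable homomorphism whose kernel is $Z(G)$ (since $\mathrm{Ad}(g) = \mathrm{id}$ forces $g$ to commute with every $\exp(tX)$, hence with a neighborhood of $1$, hence with $G$ by definable connectedness), yielding the embedding $G/Z(G) \hookrightarrow \GL_n(K)$ with $n = \dim G$.

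For (v), I would invoke Chevalley's theorem that every semisimple Lie subalgebra of $\mathfrak{gl}_n$ over a field of characteristic zero is algebraic: the Zariski closure of the exponential image of a neighborhood of $0$ in $\mathfrak{s}$ is a connected semisimple algebraic subgroup $S \subseteq \GL_n$ with $L(S) = \mathfrak{s}$. As an algebraic subgroup $S$ is semialgebraic over $K$; uniqueness is forced by the fact that two definably connected definable subgroups with the same Lie algebra coincide. For (vi), given centreless semisimple $G$, apply (iii) to embed $G$ definably in some $\GL_n(K)$; its image coincides with the semialgebraic $S$ produced by (v) applied to $\mathfrak{s} = L(G)$. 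The Chevalley basis theorem gives a $\Q$-form of $\mathfrak{s}$, and since $G$ is centreless it equals the adjoint group of $\mathfrak{s}$, a canonical construction; writing out structure constants in a Chevalley basis produces defining equations over $\Z$.

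Part (ii) is where I expect the main obstacle. For a definably simple noncommutative $G$, the Peterzil--Pillay--Starchenko trichotomy together with the field interpretation results identify $G$ up to definable isomorphism with the $K$-points of a simple algebraic group, so $G$ is elementarily equivalent to a real simple Lie group; the latter is perfect with uniform commutator width by classical results of Ree and Goto, and first-order transfer delivers the same uniform bound for $G$. For general semisimple $G$, decompose $G/Z(G)$ into finitely many definably simple factors (as in Remark 1.4), sum the individual bounds, and lift through the finite central extension $Z(G) \to G \to G/Z(G)$, absorbing the finite kernel at the cost of a bounded number of extra commutators. The delicate point is that the uniform commutator width must be expressible by a first-order sentence for elementary transfer to apply; this is ensured by the surjectivity of the fixed-length commutator word map $(g_1,h_1,\ldots,g_r,h_r) \mapsto \prod_i [g_i,h_i]$, which is a first-order statement about $G$.
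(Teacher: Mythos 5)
First, note that the paper does not prove this statement at all: Fact 1.5 is explicitly a list of results imported from \cite{OPP}, \cite{PPSI}, \cite{PPSIII} and \cite{HPP}, so your sketch can only be compared with the arguments in those sources. Measured against them, the high-level architecture of your proposal is broadly right for (i), (v) and the algebraic-group route to (ii), but there is one recurring gap that would sink (iii) and (iv) as written: you lean on the exponential map, one-parameter subgroups and flows. Here $K$ is an arbitrary real closed field underlying an arbitrary o-minimal expansion; there is no convergent $\exp$, no ODE theory, and a definable group need not admit any definable one-parameter subgroups. So ``assembling its flows via $\exp$'' to integrate a solvable ideal of $L(G)$ to a normal definable subgroup, and ``$\mathrm{Ad}(g)=\mathrm{id}$ forces $g$ to commute with every $\exp(tX)$'' to compute $\ker(\mathrm{Ad})$, are steps that fail in this setting. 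The actual proofs invert your order of dependence: (iii) is established first by a purely definable argument about the adjoint representation (one shows $C_G(g)$ is a definable subgroup of full dimension when $\mathrm{Ad}(g)=\mathrm{id}$, hence equals $G$ by definable connectedness), and then (iv), (ii) and (vi) are reduced to linear groups, where Chevalley-style algebraic group theory over $K$ replaces Lie-theoretic integration. Your treatment of (v) correctly uses exactly this algebraic substitute, which is why it is the soundest part of the sketch.

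Two smaller but genuine issues. In (vi), a Chevalley basis only yields $\Z$-structure constants for the \emph{split} form; a definably simple group over $K$ can correspond to any real form of the algebra, so you need the classification of real forms (or the actual argument of \cite{PPSI}) to get defining equations over $\Z$. In (ii), ``absorbing the finite kernel at the cost of a bounded number of extra commutators'' does not work abstractly: a finite central extension of a perfect group need not be perfect. What saves the statement is definability plus connectedness: $[G,G]\cdot Z(G)=G$ with $Z(G)$ finite shows $[G,G]$ has finite index, the increasing chain of definable sets $[G,G]_r$ stabilizes to a definable subgroup of finite index, and a definably connected group has no proper definable subgroup of finite index, so $G=[G,G]=[G,G]_r$ for some $r$. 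That is the step your sketch is missing.
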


\vspace{5mm}
\noindent
This paper is closely related to our earlier papers \cite{CCI}, \cite{CCII}, to \cite{HPP}, and to themes in the first author's thesis \cite{Conversano-thesis}.  The first author would like to thank her advisor Alessandro Berarducci, as well as Ya'acov Peterzil.

\section{Ind-definability, semisimplicity, and universal covers}

The expressions {\em ind-definable}, {\em $\vee$-definable}, and {\em locally definable} are more or less synonymous, and refer to definability by a possibly infinite disjunction of first order formulas. There is a considerable literature on ind-definability and the ``category" of ind-definable sets. See for example the detailed treatment in section 2.2 of \cite{HL}. Likewise there is a lot written on ind-definable spaces and groups in the $o$-minimal setting, especially in the context of universal covers and fundamental groups. See for example \cite{EdmundoI} and \cite{EdmundoII}. So we refer to these other sources for more details and  restrict ourselves here to fixing notation suitable for the purposes of this paper. 

We start with $T$ an arbitrary complete theory in a countable language $L$ say and ${\bar M}$ a saturated model of $T$. A definable set means a definable set in ${\bar M}$ unless we say otherwise. $M$ denotes a small elementary substructure. 
\begin{Definition} (i) By an (abstract) $ind$-definable set $X$ we mean a countable collection $(X_{i}:i<\omega)$ of definable sets together with definable injections $f_{i}:X_{i}\to X_{i+1}$ for each $i$, where we identify $X$ with the directed union (via the $f_{i}$) of the $X_{i}$. By a definable subset of $X$ we mean a definable subset of some $X_{i}$ (in the obvious sense). We say $X$ is defined over $M$ if the $X_{i}$ and $f_{i}$ are, in which case we also have $X(M)$.
\newline
(ii) An $ind$-definable group is an ind-definable set as in (i) such that $X$ has a group operation which is definable, namely the restriction to each $X_{i}\times X_{j}$ is definable, hence with image in some $X_{k}$. 
\newline
(iii) An ind-definable set $X$ as in (i) is called a concrete ind-definable set if for some definable set $Y$, all the $X_{i}$ are subsets of $Y$ and 
each $f_{i}$ is the identity map restricted to $X_{i}$, namely $X_{i}\subseteq X_{i+1}$ for all $i$ whereby $X$ is simply $\bigcup_{i}X_{i}$, an ind-definable subset of $Y$. 
\end{Definition}

\begin{Remark} (i) If $X$ is an ind-definable subset of the definable set $Y$ as in (iii) above, and $Z$ is a definable subset of $Y$ contained in $X$, then by compactness $Z$ is contained in some $X_{i}$. Hence the notion of a {\em definable subset} of the abstract ind-definable set $X$ is consistent with the natural notion when $X$ is concrete.
\newline
(ii) There are obvious notions of a function between (abstract) ind-definable sets being definable (or we should say ind-definable). Note in particular that if $X$ is ind-definable, $Y$ is definable and $f:X\to Y$ is definable and surjective then already the restriction of $f$ to some $X_{i}$ is surjective (by compactness). 
\end{Remark}
\vspace{2mm}
\noindent

We can formulate some basic notions such as definable connectedness for groups at this level of generality.
\begin{Definition} (i) Let $X$ be an ind-definable set and $Y$ a subset of $X$. We will say that $Y$ is discrete if for any definable subset $Z$ of $X$, $Z\cap Y$ is finite.
\newline
(ii) Let $G$ be an ind-definable group. We will call $G$ {\em definably connected} if $G$ has {\em no} proper subgroup $H$ with the properties: for each definable subset $Z$ of $G$, $Z\cap H$ is definable and $Z$ meets only finitely many distinct cosets of $H$ in $G$. 
\end{Definition} 

Maybe we should rather use the expression ``locally definably connected" in (ii) above, but we leave it as is. In any case when $X$ is a definable set  ($G$ a definable group) the above notions reduce to $Y$ is finite ($G$ has no proper definable subgroup of finite index). Let us state for the record:

\begin{Lemma} Let $G$ be a definably connected ind-definable group. Then any discrete normal subgroup of $G$ is central. 
\end{Lemma}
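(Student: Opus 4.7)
The plan is to mimic the classical topological argument: in a connected topological group, the orbit of a point in a discrete normal subgroup under conjugation is both connected and discrete, hence a singleton. I need to reinterpret this in the ind-definable setting using the given definitions.

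Fix $n\in N$ and consider the conjugation map $\phi: G\to G$, $\phi(g) = gng^{-1}$. By normality of $N$ we have $\phi(G)\subseteq N$. Writing $G = \bigcup_{i} X_{i}$ as the directed union of its definable pieces, for each $i$ the image $\phi(X_{i})$ is a definable subset of $G$ (ind-definability of the group operation makes the restriction of $\phi$ to $X_{i}$ a definable map into some $X_{k}$). Since $\phi(X_{i}) \subseteq N$ and $N$ is discrete in $G$, the intersection $\phi(X_{i}) \cap N = \phi(X_{i})$ is finite.

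Now let $H = C_{G}(n) = \phi^{-1}(n)$. The plan is to show $H = G$ by violating the definable connectedness of $G$ unless $H$ is all of $G$. For this I need to verify the two properties in Definition 2.3(ii) for any definable $Z \subseteq G$. First, $Z \subseteq X_{i}$ for some $i$ (by Remark 2.2(i)), and then $Z\cap H$ is the preimage of the singleton $\{n\}$ under the definable map $\phi|_{Z}: Z \to X_{k}$, hence definable. Second, two elements $g_{1}, g_{2} \in G$ lie in the same coset of $H$ iff $\phi(g_{1}) = \phi(g_{2})$, so the cosets of $H$ meeting $Z$ are in bijection with $\phi(Z) \subseteq \phi(X_{i})$, a finite set by the previous paragraph.

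Thus $H$ satisfies the conditions that would make $G$ fail to be definably connected, unless $H = G$. Since $G$ is definably connected, we conclude $H = G$, i.e.\ $n \in Z(G)$. As $n\in N$ was arbitrary, $N\subseteq Z(G)$. There is no real obstacle here; the only subtlety is the mild bookkeeping to verify that $C_{G}(n)$ interacts correctly with the definable subsets of $G$, which comes down to the fact that $\phi$ is ind-definable and its fibers meet each $X_{i}$ in a definable set.
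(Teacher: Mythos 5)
Your proof is correct and is essentially the same argument as the paper's: fix $n\in N$, observe that the conjugation orbit $\{gng^{-1}:g\in Z\}$ of $n$ over any definable $Z$ is a definable subset of the discrete group $N$, hence finite, so $C_{G}(n)$ meets every definable set in a definable set and has only finitely many cosets meeting any definable set, forcing $C_{G}(n)=G$ by definable connectedness. The extra bookkeeping you supply (the bijection between cosets meeting $Z$ and points of $\phi(Z)$, and the definability of $Z\cap C_G(n)$ as a fiber) is exactly what the paper leaves implicit.
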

\begin{proof} Let $N$ be a discrete normal ind-definable subgroup of $G$. Then $G$ acts on $N$ by conjugation. Let $n\in N$ and let $H$ be $C_{G}(n)$ which clearly meets each definable subset of $G$ in a definable set. Let $Z$ be a definable subset of $G$. Then $\{gng^{-1}: g\in Z\}$ is a definable subset of $N$, so finite as $N$ is discrete. So only finitely many distinct cosets of $H$ in $G$ meet $Z$. As this is true for all definable $Z$ and $G$ is definably connected we see that $H = G$, i.e. $n$ is central in $G$.
\end{proof}

We now specialize to the $o$-minimal case, i.e. $T$ is an $o$-minimal expansion of $RCF$. We will only work with concrete ind-definable sets. When  $X = G$ is a (concrete) ind-definable group, then by \cite{EdmundoI} $X$ can be definably equipped with a topology such that the group operation is continuous (as in the case for definable groups), and in fact $C^{k}$ for arbitrarily large $k$.  Definable connectedness as defined above has a ``topological" interpretation. Also $G$ has a well-defined Lie algebra (over the ambient real closed field). Here is our main definition (which agrees with the usual one when $G$ is definable).

\begin{Definition} 
We will call $G$ {\em ind-definable semisimple} if $G$ is ind-definable and definably connected, $Z(G)$ is discrete, and $G/Z(G)$ is definable and semisimple (namely there is a definable semisimple group $D$ and a definable surjective homomorphism from $G$ to $D$ with kernel $Z(G)$, and note that $D$ will be centreless). 
\end{Definition}

\begin{Remark} An equivalent definition is: $G$ is ind-definable, definably connected, and there is a definable surjective homomorphism $\pi$ from $G$ to a definable (not necessarily centreless) semisimple group $D$ such that $ker(\pi)$ is discrete. 
\end{Remark}

\begin{Lemma} An ind-definable semisimple group is perfect.
\end{Lemma}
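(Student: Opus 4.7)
The plan is to apply Fact 1.5(ii) to $D=G/Z(G)$ to reduce the claim to showing $Z(G)\subseteq[G,G]$, and then to deduce this by exhibiting $[G,G]$ as a definable open subgroup, after which definable connectedness forces $[G,G]=G$.

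By Fact 1.5(ii) there is an integer $r$ such that every element of $D$ is a product of at most $r$ commutators. Given $g\in G$, choose lifts in $G$ of a commutator decomposition of $\pi(g)$; because the commutator $[g_i,h_i]$ is unchanged by replacing $g_i$ or $h_i$ by a central multiple, this yields $g=[g_1,h_1]\cdots[g_r,h_r]\cdot z$ for some $z\in Z(G)$. Hence $G=[G,G]\cdot Z(G)$, and the claim reduces to $Z(G)\subseteq[G,G]$.

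For this, I would show that $[G,G]$ contains a definable open neighborhood of the identity in $G$. Since $L(G)=L(D)$ is semisimple, $[L(G),L(G)]=L(G)$. Choosing $X_i,Y_i\in L(G)$ ($i=1,\ldots,\dim G$) so that the brackets $[X_i,Y_i]$ span $L(G)$, the iterated commutator map built from $\exp(s_iX_i)$ and $\exp(t_iY_i)$ has a surjective second-order differential at the origin (its leading term is $(s_i,t_i)\mapsto\sum s_it_i[X_i,Y_i]$), so by an $o$-minimal implicit function argument its image contains a definable open neighborhood $U$ of the identity with $U\subseteq[G,G]$. A standard dimension-stabilization argument in definably connected ind-definable groups then gives $[G,G]=\bigcup_nU^n=U^{n_0}$ for some finite $n_0$, so $[G,G]$ is in fact a definable open subgroup of $G$.

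I would conclude by verifying the criterion of Definition 2.3 for $H=[G,G]$: for any definable $Z\subseteq G$, the intersection $Z\cap H$ is definable, and the partition of $Z$ by cosets of $H$ consists of open definable subsets of $Z$, hence has only finitely many cells by $o$-minimality. Definable connectedness of $G$ then forces $[G,G]=G$, as required. The main obstacle is the openness step: making rigorous the surjectivity of the second-order differential of the iterated commutator map and the application of an $o$-minimal implicit function theorem to produce the open neighborhood $U\subseteq[G,G]$.
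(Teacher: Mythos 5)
Your overall frame --- verify the two conditions of Definition 2.3(ii) for $H=[G,G]$ and then invoke definable connectedness of $G$ --- is the same as the paper's, but the way you try to establish those conditions contains a step that is actually false. The assertion that $\bigcup_n U^n$ stabilizes at some $U^{n_0}$, so that $[G,G]$ is a \emph{definable} open subgroup, cannot hold in general: the $o$-minimal universal cover of $\SL_2(K)$ is an ind-definable semisimple group which is perfect but not definable (its centre is an infinite discrete group, isomorphic to $\Z$), so its commutator subgroup --- namely itself --- is not definable. The ``dimension-stabilization'' you appeal to is a fact about definable subsets generating subgroups \emph{inside a definable ambient group}; for a merely ind-definable $G$ the chain $U\subseteq U^2\subseteq\cdots$ need not stabilize even though every term has dimension $\dim G$. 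Relatedly, over a non-archimedean real closed field an open ind-definable subgroup of a definably connected group can be proper (e.g.\ the bounded elements $\bigcup_n(-n,n)$ in $(K,+)$), so openness of $[G,G]$ by itself proves nothing; and your last step, that the cosets of $H$ partition a definable $Z$ into \emph{definable} open pieces, presupposes exactly the definability of $Z\cap gH$ that is in question. (You flag the implicit-function step as the main obstacle; in fact that step is the unproblematic one --- it is essentially how Fact 1.5(ii) is proved for definable semisimple groups --- and the stabilization step is where the argument breaks.)

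The paper never attempts to make $[G,G]$ globally definable. Instead it fixes an arbitrarily large definable $Y\subseteq G$ chosen so that $Y\cap[G,G]$ already maps onto $D$ under $\pi$ (possible because $D=[D,D]_r$, Fact 1.5(ii)), and shows that $Y\setminus[G,G]$ is a finite union of central translates $c\cdot(Y\cap[G,G])$ with $c$ ranging over the finite set $(Y\cdot Y^{-1})\cap Z(G)$, finiteness coming from discreteness of $Z(G)$. Hence $Y\cap[G,G]$ and its complement in $Y$ are both ind-definable, so $Y\cap[G,G]$ is definable by compactness, and the same computation shows $Y$ meets only finitely many cosets of $[G,G]$. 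To rescue your argument you would need to replace the stabilization step by a local argument of this kind; note in particular that your reduction to $Z(G)\subseteq[G,G]$ together with the open neighbourhood $U\subseteq[G,G]$ does not finish the proof, since the central elements, lying in a discrete subgroup, are not captured by any single finite power $U^{n}$ --- which is precisely why the union fails to stabilize.
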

\begin{proof} Let $G$ be our ind-definable semisimple group, and $\pi:G\to D$ definable with $D$ definable semisimple. Let $G_{1} = [G,G]$. We want to argue that (i) the intersection of $G_{1}$ with any definable subset $Z$ of $G$ is definable and that moreover (ii) $Z$ intersects only finitely many distinct cosets of $G_{1}$ in $G$. Definable connectedness of $G_{1}$ will then imply that $G_{1} = G$. 
\newline
We first prove (i). It suffices to show that for arbitrarily large definable subsets $Y$ of $G$, $G_{1}\cap Y$ is definable. As $D = [D,D]_{r}$ (the collection of products of $r$ commutators), we may assume, by enlarging $Y$ that $Y\cap [G,G]$ maps onto $D$ under $\pi$. Now clearly $Y\cap [G,G]$ is ind-definable.
\newline
{\em Claim.} $Y\setminus [G,G]$ is ind-definable.
\newline
{\em Proof of claim.} Let $y\in Y\setminus [G,G]$, and let $\pi(y) = d\in D$. By our assumption above there is $x\in Y\cap [G,G]$ such that $\pi(x) = d$. Hence $x^{-1}y\in ker(\pi) = Z(G)$. Note that $x^{-1}y\in Y^{-1}\cdot Y$ a definable subset of $G$. By definition of $G$ being ind-definable semisimple, $Z(G)\cap (Y\cdot Y^{-1})$ is finite. Hence $Y\setminus [G,G]$ equals the union of translates $c\cdot(Y\cap[G,G])$ for $c$ ranging over the
(finite) set of elements of $Z(G) \cap (Y\cdot Y^{-1})$ which are not in $[G,G]$. This proves the claim.

\vspace{2mm}
\noindent
By the claim and compactness, $Y\cap [G,G]$ is definable. We have proved (i). The proof of the claim shows (ii). So the lemma is proved.
\end{proof}

\begin{Lemma} Let $G$ be an ind-definable semisimple group. Then $L(G)$ is semisimple.
\end{Lemma}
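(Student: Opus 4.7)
The plan is to use the definable surjective homomorphism $\pi\colon G\to D$ from the definition of ind-definable semisimplicity, where $D$ is a definable semisimple group and $\ker(\pi)=Z(G)$ is discrete. I will show that the differential at the identity $d\pi_e\colon L(G)\to L(D)$ is an isomorphism of Lie algebras, and then invoke Fact 1.5(iv) for $D$.

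First I would pass to a defining piece: by Remark 2.2(ii) the restriction of $\pi$ to some $G_i$ is already surjective onto $D$, and it is definable and $C^k$ for arbitrarily large $k$ with respect to the manifold structures. Because $Z(G)$ is discrete, its intersection with any definable neighborhood $U\subseteq G_i$ of $e$ is finite, so after shrinking $U$ we may assume $\ker(\pi)\cap U=\{e\}$. Thus $\pi|_U$ is injective, which forces the differential $d\pi_e\colon L(G)\to L(D)$ to be injective. (Here $L(G)$ is identified with $L(G_i)$ for $i$ large enough that $G_i$ contains a neighborhood of $e$, which is legitimate by the definition of $L$ for ind-definable groups.)

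For surjectivity of $d\pi_e$, I would use o-minimal dimension: since $\pi|_{G_i}\colon G_i\twoheadrightarrow D$ has generic fiber of dimension $0$ (all fibers being cosets of the discrete group $Z(G)\cap G_i$), we get $\dim G_i=\dim D$, hence $\dim L(G)=\dim L(D)$, so the injective linear map $d\pi_e$ is an isomorphism. Since $\pi$ is a group homomorphism, $d\pi_e$ is a Lie algebra homomorphism, and therefore an isomorphism of Lie algebras $L(G)\cong L(D)$. By Fact 1.5(iv) applied to the definable semisimple group $D$, $L(D)$ is semisimple, so $L(G)$ is semisimple as well.

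The only place that requires care is the functorial behavior of $L(\cdot)$ on ind-definable groups and the identification $L(G)=L(G_i)$ for large $i$; but this is built into the manifold structure on ind-definable groups cited from \cite{EdmundoI}, and the paper already invokes $L(G)$ for ind-definable $G$ as a well-defined object. With that in hand, the argument is essentially the ``discrete kernel implies local isomorphism of Lie algebras'' principle, transported from the Lie group setting to the o-minimal setting.
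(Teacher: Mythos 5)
Your proof is correct and follows exactly the route of the paper: the paper also passes to the definable surjection $\pi\colon G\to D$ and asserts that the discrete kernel makes $d\pi_e$ an isomorphism $L(G)\cong L(D)$, then applies Fact 1.5(iv); you have merely filled in the injectivity/dimension details that the paper leaves implicit.
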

\begin{proof} Let $\pi: G\to D$ be the canonical surjective homomorphism to a definable semisimple group. As $ker(\pi)$ is discrete, $\pi$ induces an isomorphism between $L(G)$ and $L(D)$ and the latter is semisimple by 1.5(iv). 

\end{proof}

As remarked earlier there is a body of work on $o$-minimal universal covers, which it will be convenient to refer to (although we could use other methods, such as in section 5 of \cite{CCII}).  The content of Theorem 1.4 of \cite{Edmundo-Pantelis} is: 

\begin{Fact} Let $G$ be a definable, definably connected group. Then 
\newline 
(i) The family $Cov(G) = \{f:H\to G$, where $H$ is ind-definable, definably connected, 
$f$ is surjective and definable with discrete kernel\} has a universal object, namely some $\pi:{\tilde G} \to G$ in $Cov(G)$ such that for any $f:H\to G$ in $Cov(G)$ there is a (unique) surjective definable homomorphism $h:{\tilde G} \to H$ such that  $h\circ f = \pi$. 
\newline
(ii) Moreover the kernel of $\pi:{\tilde G} \to G$ is finitely generated.
\end{Fact}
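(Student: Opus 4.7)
The plan is to construct $\tilde G$ by imitating the classical topological construction of the universal cover of a Lie group via homotopy classes of paths, adapted to the $o$-minimal category. First I would invoke the $o$-minimal definable fundamental group $\pi_{1}^{\mathrm{def}}(G,e)$, defined via definable continuous paths $[0,1]\to G$ and definable homotopies rel endpoints, and recall that for $G$ definably connected over an $o$-minimal expansion of a real closed field this group is \emph{finitely generated}. Finite generation of $\pi_{1}^{\mathrm{def}}(G,e)$ is the content needed for clause (ii) and ultimately comes from definable triangulation together with the fact that a finite simplicial complex has finitely generated fundamental group.

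To build $\tilde G$ I would take the set of definable-homotopy classes $[\gamma]$ of definable paths $\gamma\colon[0,1]\to G$ with $\gamma(0)=e$, equipped with the pointwise product $[\gamma]\cdot[\delta]:=[\,t\mapsto\gamma(t)\delta(t)\,]$ inherited from $G$. The projection $\pi\colon\tilde G\to G$, $[\gamma]\mapsto\gamma(1)$, is a surjective homomorphism whose kernel is exactly $\pi_{1}^{\mathrm{def}}(G,e)$, immediately yielding clause (ii). To view $\tilde G$ as a concrete ind-definable group in the sense of Definition 2.1(iii), I would stratify by a definable complexity parameter on representative paths, for instance the size of the parameter tuple needed to define a representative $\gamma$ relative to a fixed cell decomposition of $G$, obtaining $\tilde G=\bigcup_{i}X_{i}$ with each $X_{i}$ a definable set of path classes and $X_{i}\subseteq X_{i+1}$. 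Definable connectedness of $\tilde G$ then reduces to that of $G$ via $\pi$.

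For the universal property, given $f\colon H\to G$ in $\mathrm{Cov}(G)$, discreteness of $\ker f$ together with ind-definability of $H$ imply that sufficiently small definable open neighborhoods of $1_{H}$ project homeomorphically onto open neighborhoods of $e$ in $G$. This local section allows me to uniquely lift any definable path $\gamma$ in $G$ starting at $e$ to a definable path $\tilde\gamma$ in $H$ starting at $1_{H}$, and to lift definable homotopies similarly. The assignment $[\gamma]\mapsto\tilde\gamma(1)$ then descends to a well-defined definable surjective homomorphism $h\colon\tilde G\to H$ with $f\circ h=\pi$; uniqueness of $h$ follows because any two such maps would differ by a definable homomorphism from the definably connected $\tilde G$ into the discrete group $\ker f$, which must be trivial.

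The main technical obstacle is the ind-definable bookkeeping: one must verify that the filtration $\{X_{i}\}$ is closed under the group law up to shifting the index, that definable homotopy rel endpoints within each definable family of paths is itself a definable equivalence relation, and that path lifting in $H$ is realized by a definable function uniformly on each $X_{i}$. The classical construction is purely set-theoretic, but here every equivalence class and every lift has to be witnessed inside finitely many first-order formulas of bounded complexity. This is exactly the machinery developed in \cite{EdmundoI} and \cite{EdmundoII}, on which the proof in \cite{Edmundo-Pantelis} is built.
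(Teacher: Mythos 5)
The paper does not actually prove this statement: Fact 2.9 is quoted verbatim as the content of Theorem 1.4 of \cite{Edmundo-Pantelis}, so there is no internal proof to compare yours against. Your overall strategy --- definable paths and definable homotopies as in \cite{B-O}, path lifting through maps with discrete kernel, and finite generation of the $o$-minimal fundamental group via triangulation --- is indeed the route taken in that literature, so the architecture of your sketch is sound.

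There is, however, one concrete gap: the ind-definable structure you propose to put on $\tilde G$. Stratifying homotopy classes of paths by ``the size of the parameter tuple needed to define a representative'' does not produce definable sets $X_{i}$: the collection of all definable maps $[0,1]\to G$ using at most $i$ parameters is not a definable family (you would also have to bound the defining formula, and even then the relation ``definably homotopic rel endpoints'' quantifies over all definable homotopies, which is not first-order over such a family). So the filtration as described is not an ind-definable presentation in the sense of Definition 2.1, and this is precisely the point where the classical set-theoretic construction fails to transfer for free. The way this is handled in \cite{EdmundoI} and \cite{Edmundo-Pantelis} is essentially the reverse of your order of argument: one first proves, via definable triangulation, that $\pi_{1}^{\mathrm{def}}(G,e)$ is finitely generated (hence countable), covers $G$ by finitely many definably simply connected definable charts, and then builds $\tilde G$ as a countable union of definable ``sheets'' indexed by elements of $\pi_{1}^{\mathrm{def}}(G,e)$, each $X_{i}$ being a finite union of sheets; within a single sheet every class has a canonical definable representative, so no quantification over arbitrary definable paths is needed. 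Your universal-property and uniqueness arguments (lifting via local sections, and the observation that two lifts differ by a homomorphism from a definably connected group into a discrete one) are fine once that presentation is in place, but as written the construction of $\tilde G$ as an ind-definable group is not yet established.
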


$\pi:{\tilde G} \to G$ is what is known as the ``$o$-minimal universal cover of $G$" and it is proved in \cite{Edmundo-Pantelis} that $ker(\pi)$ coincides with the ``$o$-minimal fundamental group" $\pi_{1}(G)$ of $G$ given in terms of definable paths and homotopies in \cite{B-O}.  Also if $G$ is definable over $M$ so is $\pi:{\tilde G}\to G$. 

\vspace{2mm}
\noindent
Although not required for the purposes of this paper, one would also expect $\pi:{\tilde G} \to G$ to have the additional property: 
\newline
For any (locally) definable central extension $f: H \to G$ of $G$ there is a (unique) definable (but not necessarily surjective) homomorphism $h:{\tilde G} \to H$ such that $\pi = h\circ f$. 

\vspace{2mm}
\noindent
\begin{Remark} If $G$ is an ind-definable semisimple group and $f:G\to G/Z(G)= H$ is the canonical surjective homomorphism from $G$ to a definable semisimple group, then by Fact 2.9(i)  $f$ is a quotient of the $o$-minimal universal cover $\pi:{\tilde H} \to H$, and by Fact 2.9(ii), $Z(G)$ is finitely generated. 
\end{Remark}

We now briefly recall the relation of the $o$-minimal universal covers to the classical universal covers of connected Lie groups. Let us suppose that $G$ is a definably connected definable group (identified with its group of ${\bar M}$-points). Let $L^{-}$ be a sublanguage of the language $L(T)$ of $T$ including the language of ordered fields, such that for some copy $\R$ of the reals living inside $K$, $\R$ with its induced $L^{-}$-structure is an elementary substructure of ${\bar M}|L^{-}$. Let us suppose also that $G$ is definable in $L^{-}$ with parameters from $\R$. Then $G(\R)$ is a connected Lie group. Moreover the $o$-minimal universal cover $\tilde G$ of $G$ is (ind)-definable in $L^{-}$ over $\R$ whereby ${\tilde G}(\R)$ makes sense as a topological (in fact Lie) group. Then Theorem 8.5 and its proof from \cite{HPP} says that ${\tilde G}(\R)$ is the classical universal cover of $G(\R)$. Moreover the kernel of ${\tilde G} \to G$ coincides with the kernel of ${\tilde!
  G}(\R) \to G(\R)$, which is the fundamental group of
the Lie group $G(\R)$. 

This applies in particular to the case when $G$ is semisimple: By \cite{HPP}, $G$ is definably isomorphic in ${\bar M}$ to a group definable in the ordered field language over $\R$. So we may assume $G$ to be already definable in the ordered field language over $\R$. So the $o$-minimal fundamental group of $G$ coincides with the fundamental group of the semisimple real Lie group $G(\R)$. 

\vspace{5mm}
\noindent
In the rest of the paper, the model $M$ will be an arbitrary model of an $o$-minimal expansion of $RCF$. When we speak of of an ind-definable set (group) in $M$ we mean $X(M)$ ($G(M)$) for $X$ ($G$) an ind-definable set (group) in ${\bar M}$ which is defined over $M$.


\section{Central extensions of definable semisimple groups}
Here we prove Theorem 1.1 when $R$ is central in $G$, hence $R = Z(G)^{0}$. In fact $S$ will be turn out to be the commutator subgroup $[G,G]$, but one has to check the various properties claimed of $S$. 

We start with a trivial fact about {\bf abstract groups}, which we give a proof of, for completeness. Recall that an (abstract) group $G$ is said to be 
{\em perfect} if $G$ coincides with its commutator subgroup $[G,G]$.
\begin{Fact} If $G$ is a central extension (as an abstract group) of a perfect group $P$, then $[G,G]$ is perfect.
\end{Fact}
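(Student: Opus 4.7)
The plan is to exploit the fact that central elements disappear inside commutators, which reduces every commutator in $G$ to a commutator of elements of $[G,G]$.

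First I would show that $G = Z \cdot [G,G]$, where $Z = \ker(\pi) \subseteq Z(G)$ is the central kernel of the quotient map $\pi : G \to P$. Since $P$ is perfect, any element $p \in P$ is a product of commutators $\prod_i [\pi(a_i), \pi(b_i)]$; lifting by choosing any preimages $a_i, b_i \in G$, for any $g \in G$ the element $g \cdot \bigl(\prod_i [a_i, b_i]\bigr)^{-1}$ maps to $1$ in $P$ and thus lies in $Z$. Hence every $g \in G$ can be written as $g = z \cdot h$ with $z \in Z$ and $h \in [G,G]$.

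The key step is now the following: for any two elements $g_1, g_2 \in G$, write $g_j = z_j h_j$ with $z_j \in Z$, $h_j \in [G,G]$. Since $z_1, z_2$ are central, the bilinearity-of-commutators identity collapses to
\[
[g_1, g_2] \;=\; [z_1 h_1, \, z_2 h_2] \;=\; [h_1, h_2],
\]
which is a commutator of two elements of $[G,G]$. Thus every generating commutator of $[G,G]$ already lies in $[[G,G],[G,G]]$, and taking products one concludes $[G,G] \subseteq [[G,G],[G,G]]$, so $[G,G]$ is perfect.

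There is essentially no obstacle here; the argument is purely formal group theory, and the only ``trick'' is noticing that centrality of $z_1, z_2$ makes them drop out of the commutator, which is exactly what turns $G = Z \cdot [G,G]$ into the perfectness of $[G,G]$. No appeal to definability, topology, or Lie theory is needed — this is why the paper labels it a fact about abstract groups.
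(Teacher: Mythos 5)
Your proof is correct and follows essentially the same route as the paper's: both arguments rest on writing $G$ as (central kernel)$\cdot$(a subgroup surjecting onto $P$) and on the observation that central factors drop out of commutators. The only cosmetic difference is that you apply the collapse to $G = Z\cdot[G,G]$ to get $[G,G]\subseteq[[G,G],[G,G]]$ directly, whereas the paper decomposes $G = N\cdot[[G,G],[G,G]]$ and phrases the conclusion as a contradiction; the content is identical.
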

\begin{proof} Let $N$ be the kernel of the surjective homomorphism $\pi:G\to P$. $N$ is central in $G$ by assumption. Let $H = [G,G]$. As $P$ is 
perfect $\pi(H) = P$, and for the same reason $\pi([H,H]) = P$. If by way of contradiction $H' = [H,H]$ is a proper subgroup of $H$, then, as $G = N\cdot H'$ we see that $[G,G] = H$ is contained in $H'$. Impossible. So $H$ is perfect.
\end{proof}

We now return to the $o$-minimal context.
\begin{Lemma} Suppose $R = Z(G)^{0}$. Let $S = [G,G]$. Then:
\newline
(i) $S$ is the unique maximal ind-definable semisimple subgroup of $G$.
\newline
(ii) $G = R\cdot S$.
\newline
(iii) $R\cap S$ is contained in $Z(S)$ and the latter is finitely generated.
\end{Lemma}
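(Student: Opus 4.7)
My plan is to take $S := [G, G] = \bigcup_{r \geq 1} [G, G]_r$, where $[G, G]_r$ denotes the set of products of at most $r$ commutators; since each $[G, G]_r$ is a definable subset of $G$, this presents $S$ as a concrete ind-definable subgroup. Part (ii) is immediate: $P = G/R$ is semisimple and hence perfect, in fact $P = [P, P]_{r_0}$ for some fixed $r_0$ by Fact~1.5(ii), so $\pi$ already surjects $[G, G]_{r_0}$ onto $P$ and $G = R \cdot S$. For the first half of (iii), the assumption $R \leq Z(G)$ gives $R \cap S \subseteq Z(G) \cap S \subseteq Z(S)$; finite generation of $Z(S)$ will fall out of Remark~2.10 once $S$ is known to be ind-definable semisimple.

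The main task is therefore to verify that $S$ is ind-definable semisimple, with $\pi|_S \colon S \to P$ as the candidate definable surjection onto a definable semisimple group. Its kernel $S \cap R$ is central in $S$. I would first apply the three subgroups lemma to $\pi^{-1}(Z(P))$, $G$, $[G, G]$ (using $[R, G] = 1$) to identify $Z(S) = \pi^{-1}(Z(P)) \cap S$, so that $Z(S)/(S \cap R)$ embeds into the finite group $Z(P)$. It then suffices to prove that $S \cap R$ is discrete in $S$, i.e.\ that $[G, G]_r \cap R$ is finite for every $r$. This is the main technical point: centrality of $R$ forces $L(R) \subseteq Z(L(G))$, so $[L(G), L(G)]$ coincides with the Levi subalgebra $\mathfrak{s}$ of $L(G)$, of dimension $\dim P$. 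A Baker--Campbell--Hausdorff style analysis of the commutator word maps $w_r \colon G^{2r} \to G$ whose images are the $[G, G]_r$ then shows that $\dim [G, G]_r$ stabilizes at $\dim \mathfrak{s} = \dim P$; combined with surjectivity of $\pi|_{[G, G]_r}$ onto $P$ for $r \geq r_0$, the $o$-minimal fibre dimension theorem gives zero-dimensional generic fibres, and translating by an element of $[G, G]_r$ lying over a generic point of $P$ transports this finiteness to the fibre over the identity, forcing $[G, G]_r \cap R$ to be finite.

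For definable connectedness of $S$, given a putative witness $H \leq S$ one uses the surjectivity $\pi([G, G]_{r_0}) = P$ and finiteness of the $H$-cosets met by $[G, G]_{r_0}$ to see that $\pi(H)$ has finite index in $P$; saturation and the full-dimension of the definable set $\pi(H \cap [G, G]_{r_0})$ in $P$, combined with definable connectedness of $P$, then force $\pi(H) = P$, whence $S = H \cdot (S \cap R)$, and the discreteness of $S \cap R$ together with the perfectness of $S$ (which holds by Fact~3.1) lets one conclude $H = S$. At this point $S$ is ind-definable semisimple, (iii) is finished via Remark~2.10, and for maximality and uniqueness in (i) any ind-definable semisimple subgroup $S' \leq G$ is perfect by Lemma~2.8, hence contained in $[G, G] = S$.

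The principal obstacle I foresee is the discreteness of $S \cap R$: unlike the other ingredients it is not formal, but rests on the Lie-theoretic identification $L([G, G]) = [L(G), L(G)] = \mathfrak{s}$ and a careful dimension count for the commutator word maps, since a priori nothing in the ind-definable setup prevents the fibres of $\pi|_{[G, G]_r}$ over the identity from being positive-dimensional.
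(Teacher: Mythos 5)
Your proposal follows essentially the same route as the paper's proof: take $S=[G,G]$, use perfectness of $P$ for (ii) and for the surjectivity of $\pi|_S$, verify ind-definable semisimplicity by establishing discreteness of $S\cap R$ and definable connectedness of $S$ (the latter by pushing a full-dimensional definable piece of the witness down to the definably connected group $P$ and then invoking perfectness of $S$), and obtain maximality and uniqueness from the fact that every ind-definable semisimple subgroup is perfect (the paper's Lemma 2.7 rather than 2.8). The single point of divergence is exactly the crux you flag, the finiteness of $[G,G]_r\cap Z(G)^{0}$: the paper simply quotes this as Lemma 1.1 of \cite{CCII}, whereas you sketch a direct argument, and your sketch is viable --- since $\mathrm{Ad}$ is trivial on the central directions, the differential of each commutator word map has image inside a translate of $\mathfrak{s}$, giving $\dim [G,G]_r\leq\dim\mathfrak{s}=\dim P$, after which your fibre-dimension and translation steps go through --- though as written this remains the least detailed step of the proposal.
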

\begin{proof}
Let $P$ be the semisimple part of $G$, namely $P$ is definable semisimple and $\pi:G\to P$ is surjective with kernel $R = Z(G)^{0}$.
\newline
(i) We first prove that $S$ is ``ind-definable semisimple". Clearly $S$ is ind-definable. By Fact 1.5(ii), $P$ is perfect hence $\pi$ induces a surjective homomorphism $\pi|S:S \to P$. By Remark 2.6 it suffices to prove that $S$ is definably connected and $ker(\pi|S)$ is discrete. By Lemma 1.1 of \cite{CCII}, for each $n$, $[G,G]_{n}\cap Z(G)^{0}$ is finite, clearly showing that $ker(\pi|S)$ is discrete. If $S$ were not definably connected, let this be witnessed by the subgroup $S_{1}$ of $S$. As $P$ is definably connected, $\pi(S_{1}) = P$, hence $G = R\cdot S_{1}$, but then $S = [G,G]$ is contained in $S_{1}$, so $S = S_{1}$ (contradiction). 
\newline
We will now show maximality and uniqueness simultaneously by showing that {\em any} ind-definable semisimple subgroup $S_{1}$ of $G$ is contained in $S$. So let $S_{1}$ be such. By Lemma 2.7 $S_{1}$ is perfect, hence $S_{1} = [S_{1},S_{1}] \leq [G,G] = S$.
\newline
So (i) is proved. We now look at (ii) and (iii). As $P$ is perfect (1.5(ii)), $S$ maps onto $P$, so $G = R\cdot S$. As remarked earlier $[G,G]_{n}\cap R$ is finite for all $n$, whereby $R\cap S$ is discrete, so by Lemma 2.4 is central in $S$. Also by Fact 2.9, $Z(S)$ is finitely generated.

\end{proof}

\section{The almost linear case}
Assume to begin with that $G$ is {\em linear}, namely a definable subgroup of $GL_{n}(K)$. Let $\mathfrak{g}$ be its Lie algebra, a 
subalgebra of $\mathfrak{gl}_{n}(K)$. Let $\mathfrak{r}$ be $L(R)$ where remember that $R$ is the solvable radical of 
$G$. 

\begin{Lemma} ($G$ linear.) Let $S$ be a maximal ind-definable definably connected semisimple subgroup of $G$. Then $S$ is definable (in fact semialgebraic). Moreover  $G = R\cdot S$ and $R\cap S$ is contained in the (finite) centre of $S$.
\end{Lemma}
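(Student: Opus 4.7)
Let $\mathfrak{s} = L(S)$, a semisimple Lie subalgebra of $\mathfrak{g} = L(G) \subseteq \mathfrak{gl}_n(K)$ by Lemma 2.8. The plan is to produce, via Fact 1.5(v), the (unique) definably connected semialgebraic subgroup $S_0 \leq GL_n(K)$ with $L(S_0) = \mathfrak{s}$ (which is semisimple by Fact 1.5(iv)), and then show that $S = S_0$. From this, definability of $S$ follows immediately, and the remaining assertions drop out from the Lie-algebra Levi decomposition $\mathfrak{g} = \mathfrak{r} \oplus \mathfrak{s}$.

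The heart of the argument is a local identification at the identity. I would first show $S_0 \subseteq G$: a definable identity neighborhood of $S_0$ (viewed inside $GL_n(K)$) sits inside $G$ because $\mathfrak{s} \subseteq \mathfrak{g}$ and both $S_0$ and $G$ are definable $C^k$-submanifolds of $GL_n(K)$ with compatible tangent spaces at $1$; definable connectedness of $S_0$ then propagates this to $S_0 \subseteq G$ globally. Next, $S$ and $S_0$ are both subgroups of $G$, both definably connected, and both have Lie algebra $\mathfrak{s}$. The same local analysis---or equivalently the observation that the ind-definable one-parameter subgroups through vectors of $\mathfrak{s}$ satisfy the same first-order ODE in $GL_n(K)$ whether computed inside $S$ or inside $S_0$---shows that $S$ and $S_0$ share a definable identity neighborhood. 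Definable connectedness of both (Definition 2.3) then forces $S = S_0$, making $S$ definable (and semialgebraic).

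With $S = S_0$ in hand, $R \cdot S$ is a definable subgroup of $G$ (since $R \trianglelefteq G$) with Lie algebra $\mathfrak{r} + \mathfrak{s} = \mathfrak{g} = L(G)$, so it has full dimension in $G$ and equals $G$ by definable connectedness, giving $G = R \cdot S$. Finally, $R \cap S$ is normal in $S$ and satisfies $L(R \cap S) \subseteq \mathfrak{r} \cap \mathfrak{s} = 0$, so it is $0$-dimensional and hence finite; Lemma 2.4 places it in $Z(S)$, and $Z(S)$ is finite by Remark 1.4 applied to the definable semisimple group $S$. The main obstacle is the local identification step above: it is the $o$-minimal substitute for the classical fact that a connected Lie subgroup of $GL_n$ is determined near the identity by its Lie algebra, and it must be carried out without a globally defined matrix exponential, via the ind-definable $C^k$-manifold machinery from Section 2.
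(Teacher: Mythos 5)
The strategy (pin $S$ to a semialgebraic group produced by Fact 1.5(v)) is the paper's, but there are two genuine gaps. First, and most seriously, the ``local identification at the identity'' is exactly the hard point, and you do not prove it: having compatible tangent spaces at $1$ does not make two definable submanifolds locally coincide (two tangent curves already fail this), and there is no global matrix exponential available to integrate $\mathfrak{s}$. The paper's substitute is a homogeneous-space argument: given a definable subgroup $S_{1}\leq G$ with $L(S)\subseteq L(S_{1})$, consider $X=G/S_{1}$ with base point $a$, the orbit map $f(g)=g\cdot a$, and Theorem 2.19 of \cite{PPSI}: $\ker(df_{e})=L(S_{1})\supseteq L(S)$ forces the differential of $f|S$ to vanish everywhere on $S$, so $f$ is locally constant on $S$, so the stabilizer of $a$ in $S$ is a subgroup as in Definition 2.3(ii) and hence equals $S$ by definable connectedness; thus $S\leq S_{1}$. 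Note this yields only the containment $S\leq S_{1}$ (which, combined with maximality, is all that is needed), not the equality of germs you aim for; and even granting a shared definable identity neighborhood, deducing $S=S_{0}$ from Definition 2.3(ii) would still require an argument, since an open ind-definable subgroup of a definably connected group is not automatically everything.

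Second, you apply Fact 1.5(v) to $\mathfrak{s}=L(S)$ itself and then assert $\mathfrak{r}+\mathfrak{s}=\mathfrak{g}$; that identity is false for a general semisimple subalgebra and is never justified. Indeed your argument nowhere uses the maximality of $S$, and as written it would ``prove'' $G=R\cdot S$ for \emph{every} ind-definable definably connected semisimple subgroup, which is false. The paper first extends $\mathfrak{s}$ to a Levi factor $\mathfrak{s}_{1}$ of $\mathfrak{g}$ (via \cite{Varadarajan}), takes the definable semisimple $S_{1}\leq G$ with $L(S_{1})=\mathfrak{s}_{1}$ from 1.5(v), proves $S\leq S_{1}$ as above, and only then invokes maximality to conclude $S=S_{1}$; this is what makes $L(S)$ a genuine Levi factor, so that $G=R\cdot S$ follows for dimension reasons. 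Your treatment of $R\cap S$ (finite because $\mathfrak{r}\cap\mathfrak{s}=0$, hence central by definable connectedness of $S$) is fine once the above is repaired.
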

\begin{proof} Let $\mathfrak{s}$ be the Lie algebra of $S$ which is semisimple by Lemma 2.8. By \cite{Varadarajan} $\mathfrak{s}$ 
extends to a Levi factor $\mathfrak{s_{1}}$ of $\mathfrak{g}$ (i.e. a semisimple subalgebra such that $\mathfrak{g}$ is the semidirect product of $\mathfrak r$ and $\mathfrak s_1$). By Fact 1.5(v) there is a definable semisimple subgroup $S_{1}$ of $G$ such that $L(S_{1}) = \mathfrak{s_{1}}$. 
We will prove that $S\leq S_{1}$, so by maximality $S = S_{1}$ and is definable.
\newline
This is a slight adaptation of material from \cite{PPSI} to the present context. Consider the definable homogeneous space $X = G/S_{1}$. We have the natural action $\alpha:G\times X \to X$ of $G$ on $X$, which is differentiable (when $X$ is definably equipped with suitable differentiable structure). 
Let $a\in X$ be $S_{1}$ and let $f:G\to X$ be: $f(g) = g\cdot a$. By Theorem 2.19(ii) of \cite{PPSI},  $L(S_{1})$ is precisely the kernel of the differential $df_{e}$ 
of $f$ at the identity $e$ of $G$.  Consider the restriction $f_{1}$ of $f$ to $S$. As $L(S) = \mathfrak{s}$ is contained in $\mathfrak{s_{1}} = 
L(S_{1})$, we see that $(df_{1})_{e}$ is $0$. By  Theorem 2.19(i) of \cite{PPSI}, $(df_{1})_{h} = 0$ for all $h\in S$, so $f_{1}$ is ``locally constant" on $S$. It 
follows that $Fix(a) = \{h\in S:h(a) = a\}$ is a subgroup of $S$ which is ``locally" of finite index (as in Definition 2.3(ii)), hence $Fix(a) = S$ which means that 
$S\leq S_{1}$, as desired.

\vspace{2mm}
\noindent
For dimension reasons $G = R\cdot S$. Clearly $R\cap S$ is finite (for dimension reasons, or because it is solvable and normal in $S$), hence central in $S$ as $S$ is definably connected. 
\end{proof}

Now we want to prove conjugacy. 
\begin{Lemma} ($G$ linear.) Any two maximal ind-definable definably connected semisimple subgroups of $G$ are conjugate. 
\end{Lemma}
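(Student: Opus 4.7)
The plan is to reduce the group-theoretic conjugacy of $S_1$ and $S_2$ to the classical Levi--Mal'cev conjugacy of their Lie algebras as Levi factors of $\mathfrak{g}=L(G)$, and then lift that Lie algebra automorphism to an inner automorphism of $G$ coming from an element of $R$.

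First I would apply Lemma 4.1 to both $S_1$ and $S_2$: each is then a semialgebraic definable subgroup of $G$, with $G=R\cdot S_i$ and $R\cap S_i$ finite. The equalities $\dim G=\dim R+\dim S_i$ (using that $R\cap S_i$ is finite) and $\mathfrak{r}\cap\mathfrak{s}_i\subseteq L(R\cap S_i)=0$ give $\mathfrak{g}=\mathfrak{r}\oplus\mathfrak{s}_i$ as vector spaces, where $\mathfrak{s}_i:=L(S_i)$. Since each $\mathfrak{s}_i$ is a semisimple subalgebra (Lemma 2.8) complementary to the solvable radical $\mathfrak{r}$, each $\mathfrak{s}_i$ is a Levi factor of $\mathfrak{g}$.

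Next I would invoke the Lie-algebraic Levi--Mal'cev conjugacy theorem, valid over any field of characteristic zero and hence over $K$ (Varadarajan, Theorem~3.14.2): there is an element $x$ in the nilradical $\mathfrak{n}\subseteq\mathfrak{r}$ of $\mathfrak{g}$ such that the automorphism $\sigma=e^{\operatorname{ad}x}$ of $\mathfrak{g}$ satisfies $\sigma(\mathfrak{s}_1)=\mathfrak{s}_2$. Note that $\operatorname{ad}x$ is nilpotent on $\mathfrak{g}$, so $\sigma$ is well-defined by a polynomial expression in $\operatorname{ad}x$.

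The key step is to lift $\sigma$ to conjugation by an element of $R$. In the linear semialgebraic setting, the nilradical $\mathfrak{n}$ integrates to a definable connected nilpotent subgroup $N\leq R$ with $L(N)=\mathfrak{n}$ (using the subalgebra--subgroup correspondence inside the definably connected solvable linear group $R$, together with the polynomial exponential on $\operatorname{ad}$-nilpotent elements). The adjoint image $\operatorname{Ad}(N)$ then contains all automorphisms $e^{\operatorname{ad}x}$ with $x\in\mathfrak{n}$, so one obtains $g\in N\subseteq R\subseteq G$ with $\operatorname{Ad}(g)\,\mathfrak{s}_1=\mathfrak{s}_2$. The subgroups $gS_1g^{-1}$ and $S_2$ are then two definable semisimple subgroups of $\operatorname{GL}_n(K)$ with the same Lie algebra $\mathfrak{s}_2$, and the uniqueness clause of Fact 1.5(v) forces $gS_1g^{-1}=S_2$.

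The main obstacle is this last lifting step: over a general real closed field $K$ the analytic exponential is not available, so one must produce the definable subgroup $N$ integrating $\mathfrak{n}$ and verify that $\operatorname{Ad}(N)$ realises the unipotent automorphism group $\{e^{\operatorname{ad}x}:x\in\mathfrak{n}\}$ of $\mathfrak{g}$ entirely within the semialgebraic/o-minimal framework. The rest of the argument is a formal consequence of Lemma 4.1, Levi--Mal'cev for Lie algebras, and Fact 1.5(v).
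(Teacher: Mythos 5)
Your strategy is genuinely different from the paper's, but as written it has a real gap exactly where you flag it. You assert that the nilradical $\mathfrak{n}$ ``integrates to a definable connected nilpotent subgroup $N\leq R$ with $L(N)=\mathfrak{n}$'' via a ``subalgebra--subgroup correspondence'' inside the solvable linear group $R$. No such correspondence is available here: over a general o-minimal expansion of a real closed field a Lie subalgebra of $L(R)$ need not be the Lie algebra of \emph{any} definable subgroup (this is precisely the failure of exponentiation over $K$; Fact 1.5(v) integrates only \emph{semisimple} subalgebras). Moreover the full nilradical $\mathfrak{n}$ of $\mathfrak{g}$ need not consist of nilpotent matrices (e.g.\ $\mathfrak{g}$ abelian spanned by a semisimple matrix), so your ``polynomial exponential'' does not even apply to all of $\mathfrak{n}$. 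The gap is fillable, but you must use the sharper form of the Lie-algebraic conjugacy theorem: the conjugating automorphism can be taken to be $e^{\operatorname{ad}x}$ with $x\in[\mathfrak{g},\mathfrak{r}]$, and $[\mathfrak{g},\mathfrak{r}]$ acts nilpotently in every finite-dimensional representation, so this particular $x$ \emph{is} a nilpotent matrix. Then $\exp(tx)$ is polynomial in $t$, $U=\{\exp(tx):t\in K\}$ is a one-dimensional unipotent algebraic group with $L(U)=Kx\subseteq L(G)$, and the locally-constant-differential argument from the proof of Lemma 4.1 (applied to the map $u\mapsto u\cdot a$ into $GL_{n}(K)/G$) shows $U\leq G$. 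With $g=\exp(x)\in G$ one has $\mathrm{Ad}(g)=e^{\operatorname{ad}x}$, and your final appeal to the uniqueness clause of Fact 1.5(v) then goes through.

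For comparison, the paper avoids integration entirely: it first reduces to the case where $G$ itself is semialgebraic (using that the group generated by $S$ and $S_{1}$ sits inside an algebraic group with the same connected component), expresses maximality, semisimplicity and conjugacy by first-order formulas in the ordered field language, and transfers to $(\R,+,\cdot,<)$, where the classical Lie-group conjugacy theorem (Varadarajan 3.18.13) applies. Your route, once repaired as above, works directly over $K$ and produces an explicit conjugator lying in a unipotent subgroup of $R$; the paper's route trades that explicitness for a shorter argument resting on transfer.
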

\begin{proof} Let $S, S_{1}$ be such. By Lemma 4.1 both $S, S_{1}$ are semialgebraic subgroups of $G\leq GL_{n}(K)$. By the proof of Lemma 3.1 in \cite{Pillay}, the (abstract) subgroup $H$ of $GL_{n}(K)$ generated by $S$ and $S_{1}$ is contained in some algebraic subgroup $H_{1}$ of $GL_{n}(K)$ such that moreover $H$ contains an open semialgebraic subset of $H_{1}$. 
For dimension reasons the definably (or equivalently semialgebraically) connected component of $H_{1}$ is contained in $G$. Hence we may assume that $G$ is already semialgebraic. (One could also get to this conclusion by using 4.1 of 
\cite{PPSIII} that there are semialgebraic  $G_{1} < G < G_{2}\leq GL_{n}(K)$ with $G_{1}$ normal in $G$ normal in $G_{2}$ and $G_{1}$ normal in $G_{2}$ such that $G_{2}/G_{1}$ is abelian.) 

We now make use of transfer to the reals together with the classical Levi theorem to conclude the proof.   Without loss of generality $G, S, S_{1}$ are defined by formulas $\phi(x,b)$, $\psi(x,b)$, $\psi_{1}(x,b)$ where these are formulas in the language of ordered fields with parameters witnessed by $b$. We may assume that these formulas include conditions on the parameters $b$ expressing that the group defined by $\phi(x,b)$ is definably connected and of the given dimension, also that the subgroups defined by $\psi(x,b)$, $\psi_{1}(x,b)$ are {\em maximal (semialgebraic) semisimple}. 
For example the family of definable abelian subgroups of a definable group is uniformly definable in terms of centralizers, so we can express that the subgroup defined by $\psi(x,b)$ is semisimple (definably connected with no  infinite normal abelian subgroup). We can also express maximality, by witnessing a solvable definable normal subgroup $R$ such that  $G$ is $R\cdot \psi(x,b)^{K}$. 
\newline
Let $\sigma$ be the sentence in the language of ordered fields expressing that for any choice $c$ of parameters, the subgroups of $\phi(x,c)^{K}$ defined by $\psi(x,c)$ and $\psi_{1}(x,c)$ are conjugate.
\newline
{\em Claim. $\sigma$ is true in the model $(\R,+,\cdot,<)$.}
\newline
{\em Proof of claim.}  Choose parameters $c$ from $\R$. Let $H, W, W_{1}$ be the groups (subgroups of $GL_{n}(\R)$) 
defined by the formulas $\phi(x,c)$, $\psi(x,c)$, $\psi_{1}(x,c)$. It is not hard to see that $W,W_{1}$ are maximal 
semisimple Lie subgroups of $H$ (which also happen to be closed) hence are conjugate in $H$ by 3.18.13 of 
\cite{Varadarajan}.
\newline
So the claim is proved, hence $\sigma$ is true in the structure $M$, whereby $S, S_{1}$ are conjugate in $G$.  
\end{proof}

Note that Lemmas 4.1 and 4.2 give Theorem 1.1 in the linear case. Let us now prove Theorem 1.2 in the linear case, by a slight extension of the proof of Lemma 4.1.

\begin{Lemma} ($G$ linear.) Let $\mathfrak{s}$ be a semisimple Lie subalgebra of $L(G)$. Then there is a {\em unique} ind-definable semisimple subgroup $S$ of $G$ such that $L(S) = \mathfrak{s}$. Moreover $S$ is definable.  
\end{Lemma}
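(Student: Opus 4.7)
The strategy is to transplant the proof of Lemma~4.1 twice: once to produce a definable candidate for $S$ inside $G$, and once to pin down any ind-definable competitor.

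\emph{Existence and definability.} Since $\mathfrak{s}\subseteq L(G)\subseteq\mathfrak{gl}_n(K)$, Fact~1.5(v) supplies a definably connected definable (in fact semialgebraic) semisimple subgroup $S$ of $\GL_n(K)$ with $L(S)=\mathfrak{s}$. To see $S\leq G$, I would repeat the differential argument of Lemma~4.1 with $\GL_n(K)$ playing the role of $G$ and $G$ the role of $S_1$: let $X=\GL_n(K)/G$, $a=[G]\in X$, and $f\colon\GL_n(K)\to X$, $g\mapsto g\cdot a$, so that $\ker(df_e)=L(G)$ by Theorem~2.19(ii) of \cite{PPSI}. Since $L(S)=\mathfrak{s}\subseteq L(G)$, one has $(df|_S)_e=0$ and hence $(df|_S)_h=0$ for every $h\in S$ by 2.19(i); thus $f|_S$ is locally constant, and definable connectedness of $S$ forces $S\cap G=S$, i.e.\ $S\leq G$.

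\emph{Uniqueness.} Let $S'$ be any ind-definable semisimple subgroup of $G$ with $L(S')=\mathfrak{s}$. The same differential trick, now applied to $Y=G/S$ and $f\colon G\to Y$, $g\mapsto g[S]$, gives $\ker(df_e)=L(S)=\mathfrak{s}=L(S')$, so $f|_{S'}$ is locally constant; therefore $S'\cap S$ is of locally finite index in $S'$, and $S'\leq S$ by definable connectedness of $S'$. To promote this inclusion to equality, note that the inclusion $S'\hookrightarrow S$, read in a definable chart around $e$ in $S'$ (supplied by the ind-definable manifold structure of \cite{EdmundoI}) and one in $S$, is a definable map with differential at $e$ equal to $\mathrm{id}_{\mathfrak{s}}$; the $o$-minimal inverse function theorem then yields a definable open neighbourhood $V$ of $e$ in $S$ contained in $S'$. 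Since a definable open neighbourhood of the identity generates a definably connected definable group, we obtain $S\subseteq S'$, and so $S'=S$.

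The principal technical point is the closing step, which converts the equality of Lie algebras $L(S')=L(S)$ into the set-theoretic equality $S'=S$. It relies on two standard ingredients in the $o$-minimal setting: the existence of definable charts for ind-definable manifold groups (so that the inverse function theorem applies to the inclusion $S'\hookrightarrow S$), and the fact that a definable open subset of $S$ containing $e$ generates $S$. Modulo these, the proof is essentially a mechanical combination of Fact~1.5(v) with two applications of the differential criterion of Theorem~2.19 in \cite{PPSI}.
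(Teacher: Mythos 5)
Your construction of the definable candidate $S$ and the proof that any ind-definable competitor $S'$ satisfies $S'\leq S$ follow the paper's route (the differential argument of Lemma 4.1, transplanted exactly as you do; the paper is in fact terser than you about why the group supplied by Fact 1.5(v) lies inside $G$). The gap is in the closing step. It is unproblematic that $S'$ contains a definable open neighbourhood $V$ of $e$ in $S$ --- indeed $\dim S'=\dim S$ already forces some definable piece of $S'$ to have nonempty interior in $S$, so one does not even need the inverse function theorem. But the inference ``a definable open neighbourhood of the identity generates a definably connected definable group'' is false over a non-archimedean real closed field, which is precisely the setting of the paper. In $(K,+)$ the interval $(-1,1)$ generates only the convex subgroup of finite elements; in $\SL_2(K)$ a bounded definable neighbourhood of the identity generates only the proper, open, ind-definable subgroup of matrices all of whose entries are finite. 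So openness of $S'$ in $S$ does not yield $S'=S$: a definably connected definable group can have proper open ind-definable subgroups of full dimension.

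To close the argument one must invoke the full strength of Definition 2.5, namely that $S'/Z(S')$ is \emph{definable} --- note that the ``finite entries'' subgroup of $\SL_2(K)$ is not a counterexample to the lemma precisely because its quotient by its centre fails to be definable. The paper does this as follows: take the witnessing surjection $\pi\colon S'\to P$ onto a centreless definable semisimple group $P$, which by Fact 1.5(vi) may be taken linear and semialgebraic; consider the graph $W$ of $\pi$ inside $S\times P$; use Lemma 2.8, Fact 1.5(v) and the differential argument once more to trap $W$ inside a semialgebraic semisimple $W_1$ with $\dim W_1=\dim S=\dim P$; conclude that $\ker(\pi)$ is finite, hence that $\pi$ and $S'$ are definable; and then $S'\leq S$ with $\dim S'=\dim S$ and $S$ definably connected gives $S'=S$. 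Your proof needs this step (or some equivalent use of the definability of $S'/Z(S')$) in place of the generation argument.
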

\begin{proof} First let $S_{1}$ be a semialgebraic semisimple subgroup of $G$ with $\mathfrak{s} = L(S_{1})$. Let $S$ be another ind-definable semisimple subgroup of $G$ with $L(S) = {\mathfrak{s}}$. The proof of Lemma 4.1 shows that $S \leq S_{1}$. Let $P$ be a semisimple centreless definable group with $\pi:S\to P$ witnessing the semisimplicity of $S$ (according to Definition 2.5). By 1.5(vi) we may assume $P$ to be linear and semialgebraic. Let us now work inside the linear semialgebraic group $S_{1}\times P$. The graph of $\pi$, $W$ say, is clearly an ind-definable semisimple subgroup of
$S_{1}\times P$. Let $\mathfrak{w}$ be its Lie algebra, which is semisimple by 2.8. By 1.5(v), there is a semialgebraic semisimple $W_{1}\leq S_{1}\times P$ such that $L(W_{1}) = \mathfrak{w}$. Again as in the proof of 4.1 one sees that 
$W\leq W_{1}$. Note that $dim(W_{1}) = dim(S_{1}) = dim(P)$. So $W_{1}$ has finite cokernel. We will assume for simplicity that this cokernel is trivial whereby $W_{1}$ is he graph of a definable (semialgebraic) homomorphism $\pi_{1}$ from $S_{1}$ to $P$, which has to have finite kernel. Hence $ker(\pi)$ is also finite  from which it follows that $\pi$ is definable, as is $S$. Hence $S = S_{1}$. 
\end{proof} 

\vspace{2mm}
\noindent
We will say that $G$ is {\em almost linear} if for some finite central subgroup $N$ of $G$, $G/N$ is linear (i.e. there is a definable homomorphism from $G$ into some $GL_{n}(K)$ with finite central kernel). 

\begin{Lemma} Theorems 1.1 and 1.2 hold when $G$ is almost linear. Moreover in this case any ind-definable semisimple subgroup of $G$ is definable. 
\end{Lemma}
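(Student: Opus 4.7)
The plan is to reduce to the linear case by means of the finite-kernel central quotient $\pi\colon G \to \bar{G} := G/N$, where $N$ is the given finite central subgroup and $\bar{G}$ is linear. Results for $\bar{G}$ from Lemmas 4.1--4.3 are then lifted to $G$ through $\bar{H} \mapsto \pi^{-1}(\bar{H})^{0}$, the definably connected component of the preimage.

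The core step is to show that if $S \leq G$ is any ind-definable semisimple subgroup, then $\pi(S)$ is ind-definable semisimple in $\bar{G}$ and $S = \pi^{-1}(\pi(S))^{0}$. For the first half: $\pi(S)$ is ind-definable; it is definably connected because any subgroup of $\pi(S)$ satisfying Definition 2.3(ii) pulls back to such a subgroup of $S$; and $Z(\pi(S)) = \pi(Z(S))$, since a lift $g \in S$ of a central element of $\pi(S)$ has $C_S(g)$ of index $\leq |N|$ in $S$, hence equal to $S$ by definable connectedness. This gives $\pi(S)/Z(\pi(S)) \cong S/Z(S)$ definable semisimple, and so Lemma 4.3 shows $\pi(S)$ is definable. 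For the second half: $\pi^{-1}(\pi(S))^{0}$ has finite index in the definable group $\pi^{-1}(\pi(S))$, so $S \cap \pi^{-1}(\pi(S))^{0}$ has finite index in $S$ and by definable connectedness equals $S$; conversely, by Remark 2.2(ii) some $S_i$ in the presentation of $S$ satisfies $\pi(S_i) = \pi(S)$, so (as $\pi$ has finite kernel) $S_i$ has full dimension in the definably connected group $\pi^{-1}(\pi(S))^{0}$ and generates it. In particular, every ind-definable semisimple subgroup of $G$ is definable.

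For Theorem 1.1, let $\bar{S}$ be the maximal (definable) ind-definable semisimple subgroup of $\bar{G}$ given by Lemmas 4.1 and 4.2, and set $S := \pi^{-1}(\bar{S})^{0}$. Then $S$ is definable semisimple, since any infinite normal abelian subgroup of $S$ would project to an infinite normal abelian subgroup of $\bar{S}$. Any ind-definable semisimple $S'$ of $G$ has $\pi(S')$ contained, after a $\bar{G}$-conjugation, in $\bar{S}$, and lifting the conjugating element shows some $G$-conjugate of $S'$ lies in $S$; this yields both maximality and conjugacy uniqueness. The same lifting argument gives $R = \pi^{-1}(\bar{R})^{0}$ for the solvable radicals, so $\dim R + \dim S = \dim \bar{G} = \dim G$ and $R \cap S$ is finite. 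Then the multiplication map $R \times S \to G$ is a local diffeomorphism at $(e,e)$, so $R \cdot S$ contains an open neighborhood of $e$; since $R$ is normal, $R \cdot S$ is a subgroup, hence a definable open subgroup, hence all of $G$ by definable connectedness of $G$. Finally, $R \cap S$ is central in $S$ (Lemma 2.4), and $Z(S)$ is finite as $S$ is definable semisimple.

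Theorem 1.2 then follows by identifying $L(G) \cong L(\bar{G})$ via the isomorphism $d\pi$: a semisimple $\mathfrak{s} \leq L(G)$ corresponds to some $\bar{\mathfrak{s}} \leq L(\bar{G})$, Lemma 4.3 gives a unique definable $\bar{S}$ with $L(\bar{S}) = \bar{\mathfrak{s}}$, and $S := \pi^{-1}(\bar{S})^{0}$ satisfies $L(S) = \mathfrak{s}$; uniqueness is inherited from $\bar{G}$ via the identity $S' = \pi^{-1}(\pi(S'))^{0}$ established above. The main technical obstacle is the first part of the core step, verifying that $\pi$ preserves ind-definable semisimplicity; once that is settled, Lemma 4.3 does the heavy lifting and the remaining arguments reduce to dimension counts together with the formalism of Section 2.
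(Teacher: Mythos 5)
Your proposal is correct and follows essentially the same route as the paper: reduce to the linear quotient $G/N$, observe that images of ind-definable semisimple subgroups are ind-definable semisimple and hence definable by Lemma 4.3 (with Lemma 2.8), deduce definability of the subgroup itself from finiteness of $N$, and then pull the linear-case statements back through $\pi$. The paper's own proof is just a terse version of this ("the rest easily follows from the previous lemmas"); your write-up supplies the details it omits.
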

\begin{proof} 
Let $G/N$ be linear where $N$ is finite (central) and let $\pi:G\to G/N$ be the canonical surjective homomorphism. 
Note first that by Lemma 4.3 (and Lemma 2.8), any ind-definable semisimple subgroup of $G/N$ is definable. So if $S\leq G$ is ind-definable semisimple then $\pi(S)$ is definable, and so thus is $S$. This proves the moreover clause.
The rest easily follows from the previous lemmas. 
\end{proof}

\section{The general case}

\noindent
This is an easy consequence of the special cases in section 3 and 4 but we sketch the proofs nevertheless.
\newline
First:
\begin{proof}[Proof of Theorem 1.1.]
We construct the obvious  ind-definable semisimple subgroup $S$ of $G$,  observe the desired properties, then we prove its maximality and uniqueness up to conjugacy. \\

By Fact 1.5(iii), $G_{1} = G/Z(G)^{0}$ is almost linear. 
Let $\pi:G\to G_{1}$ be the canonical surjective homomorphism. Also let $R$ be the solvable radical of $G$ (which contains $Z(G)^{0}$) and then $R_{1} = \pi(R)$ will be the solvable radical of $G_{1}$.

By Lemma 4.4 let $S_{1}$ be a a definable Levi subgroup of $G_{1}$ and let $H = \pi^{-1}(S_{1})$, a definably connected definable subgroup of $G$. So $H$ is definably an extension of semisimple definable $S_{1}$ by the central subgroup $Z(G)^{0}$. So $Z(G)^{0}$ coincides with  $Z(H)^{0}$ and will  be the solvable radical of $H$. 
\newline
So Lemma 3.2  applies with $H$. Let $S = [H,H]$ which is in particular an ind-definable semisimple subgroup of $G$. As $S_{1}\cap R_{1}$ is finite, and $S\cap Z(G)^{0}$ is finitely generated, $S\cap R$ is finitely generated  (and normal, discrete hence central in $S$). 


It remains to prove, maximality and uniqueness up to conjugacy of $S$. 
Suppose $S_{2}$ is an ind-definable semisimple subgroup of $G$ containing $S$. Then $\pi(S_{2})$ is an ind-definable, semisimple, subgroup of $G_{1}$ containing $S_{1}$, so as $S_{1}$ was a Levi subgroup of $G_{1}$, 
$\pi(S_{2}) = S_{1}$. Hence $S_{2}$ is contained in $H$ and by Lemma 3.2 applied to $H$ equals $S$.

\vspace{2mm}
\noindent
Now let $S_{2}$ be another maximal ind-definable semisimple subgroup of $G$. Let $\pi(S_{2}) = S_{3}\leq G_{1}$. 
\newline
Clearly $S_{3}$ is an ind-definable semisimple subgroup of $G_{1}$, which is by Lemma 4.4 definable. Let $S_{4}$ be a maximal definable semisimple subgroup of $G_{1}$ containing $S_{3}$. Noting that  $G_{1} = R_{1}\cdot S_{4}$, and as (by Lemma 2.7), $S_{2}$ is perfect, we see by Lemma 3.2, that  $[\pi^{-1}(S_{4}),\pi^{-1}(S_{4})]$ is ind-definable semisimple and contains $S_{2}$ hence by maximality of $S_{2}$ we have equality, and $S_{3} = S_{4}$ and moreover $S_{2} = [\pi^{-1}(S_{3}),\pi^{-1}(S_{3})]$. By Lemma 4.4 again $S_{3}$ is conjugate in $G_{1}$ to $S_{1}$ by $g_{1}$ say.
Hence (as $ker(\pi)$ is central in $G$) for any lift $g$ of $g_{1}$ to a point of $G$, $S_{2} = [\pi^{-1}(S_{3}),\pi^{-1}(S_{3})]$ is conjugate via $g$ to $S = [\pi^{-1}(S_{1}),\pi^{-1}(S_{1})]$, and we have proved conjugacy.

This completes the proof of Theorem 1.1.
\end{proof}

\begin{proof}[Proof of Theorem 1.2.] 
Again we let $\pi:G\to G/Z(G)^{0} = G_{1}$ be the canonical surjective homomorphism. If $\mathfrak{s}$ is semisimple and $\mathfrak{s}_{1} = \pi(\mathfrak{s})$ (namely the image of $\mathfrak{s}$ under the differential of $\pi$ at the identity), then $\mathfrak{s}_{1}$ is also semsimple and so by Lemma 4.4 is the Lie algebra of a unique definable semisimple subgroup $S_{1}$ of $G_{1}$. Let $H = \pi^{-1}(S_{1})$. Then by Lemma 3.2 applied to $H$ we see that $\mathfrak{s}$ is the unique Levi factor of $L(H)$ and $S = [H,H]$ is the unique ind-definable semisimple subgroup of $H$ with $L(S) = \mathfrak{s}$. 
\newline
If $S_{2}$ is another ind-definable semisimple subgroup of $G$ with $L(S_{2}) = \mathfrak{s}$, then by Lemma 4.4, $\pi(S_{2}) = S_{1}$, and by perfectness of $S_{2}$ and definable connectedness of $[H,H] = S$ we see that $S_{2} = S$. 

Theorem 1.2 is proved.
\end{proof}

\noindent
Finally we mention cases when some (any) ind-definable Levi subgroup of $G$ is definable. $G$ remains a definably connected group definable in $M$. 
\begin{Proposition} Suppose either of the following hold:
\newline
(i) $G$ is affine Nash,
\newline
(ii) $G/N$ is linear for some finite central $N$,
\newline
(iii) the semisimple part $P$ of $G$ has finite $o$-minimal fundamental group,
\newline
(iv) the semisimple part $P$ of $G$ is definably compact.
\newline
THEN any ind-definable Levi subgroup $S$ of $G$ is definable (whereby $G = R\cdot S$ with $R\cap S$ finite).
\end{Proposition}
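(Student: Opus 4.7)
My plan is to dispatch the four conditions largely independently, with (i) and (ii) reducing to material already handled in Section 4 and (iv) reducing to (iii). Case (ii) is immediate: it asserts that $G$ is almost linear, so by the \emph{moreover} clause of Lemma 4.4, every ind-definable semisimple subgroup of $G$ is already definable, in particular $S$. For Case (i), I would invoke the known structural fact that every affine Nash group admits a finite central Nash cover which embeds as a Nash (hence semialgebraic) subgroup of some $\GL_n(\R)$; this puts us in the almost-linear situation, so (i) follows from (ii).

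For Case (iii), the key is to show that the discrete subgroup $Z(S)$ is actually finite. By Theorem 1.1(i)--(ii), $G = R\cdot S$ and $S\cap R \subseteq Z(S)$, so the composition $S \hookrightarrow G \twoheadrightarrow G/R = P$ is a definable surjection whose (discrete) kernel is $S\cap R$. Thus $S\to P$ belongs to the family $Cov(P)$ of Fact~2.9, so the universal cover $\tilde P \to P$ factors through $S\to P$ via a surjection $\tilde P \to S$. Hence $S\cap R$ is a quotient of $\ker(\tilde P \to P) = \pi_1(P)$, and the hypothesis makes it finite. Next, the image of $Z(S)$ in $P$ is central, hence contained in $Z(P)$, which is finite by Remark~1.4; using $S\cap R \subseteq Z(S)$ the kernel of $Z(S) \to P$ is exactly $S\cap R$, giving $|Z(S)| \leq |S\cap R|\cdot|Z(P)| < \infty$. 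Now the defining surjection $S \to S/Z(S) = D$ onto a definable semisimple group has finite kernel; by Remark~2.2(ii) some $S_i$ in the defining chain of $S$ already surjects onto $D$, and then $S = S_i \cdot Z(S)$ is a finite union of translates of $S_i$, hence definable.

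For Case (iv), I would deduce finiteness of $\pi_1(P)$ and reduce to (iii). Using Fact~1.5(vi), replace $P$ by a definably isomorphic semialgebraic copy defined over $\R$; then the last two paragraphs of Section~2 identify the $o$-minimal fundamental group $\pi_1(P)$ with the topological fundamental group of the real Lie group $P(\R)$. Definable compactness transfers to ordinary compactness of $P(\R)$, which is semisimple, and a compact semisimple real Lie group has finite fundamental group (its simply connected cover is also compact, so the covering map has finite fibers). Hence $\pi_1(P)$ is finite and we are back in Case (iii).

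The principal obstacle is the finiteness of $Z(S)$ in Case (iii): the definition of ind-definable semisimple only supplies discreteness of $Z(S)$, and a priori $Z(S)$ could be infinitely generated. The sandwich argument placing $Z(S)$ between the two \emph{finite} groups $S\cap R$ and $Z(P)$ is the heart of the proof; the final compactness step turning ``$S \to D$ definable surjection with finite kernel'' into ``$S$ definable'' is then routine, as is the concluding observation that $G = R\cdot S$ and $R\cap S \subseteq Z(S)$ finite.
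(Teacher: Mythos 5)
Your proposal is correct in substance, and for parts (iii) and (i) it takes a genuinely different route from the paper. For (iii) the paper first reduces to the case where $G$ is a central extension of a semisimple group (via the construction in the proof of Theorem 1.1), so that $S=[G,G]$ and Remark 2.10 applies directly to say $S\to P$ is a quotient of the $o$-minimal universal cover, whence the kernel is finite and ``easily $S$ is definable.'' You instead work with an arbitrary Levi subgroup $S$ of the original $G$, observe that $S\to P$ lies in $Cov(P)$ so that $S\cap R$ is a quotient of $\pi_1(P)$, and then sandwich $Z(S)$ between the finite groups $S\cap R$ and $Z(P)$; the final step ($S\to S/Z(S)$ definable with finite kernel implies $S=Z(S)\cdot S_i$ is definable) spells out exactly what the paper leaves as ``easily.'' Your version is more self-contained and avoids re-entering the machinery of Section 5; the paper's version is shorter because it reuses that machinery. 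Both hinge on the same ingredient, Fact 2.9. For (iv) both arguments are the same (transfer to $\R$, compact semisimple Lie groups have finite fundamental group), though note that Fact 1.5(vi) as stated requires $P$ centreless; you should either cite the paragraph at the end of Section 2 (which handles arbitrary definable semisimple groups via \cite{HPP}) or first pass to $P/Z(P)$, which changes $\pi_1$ only by a finite amount.

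Two small cautions. In (i) you state the structural fact with the cover going the wrong way: what \cite{Hrushovski-Pillay} provides is a surjection $G\to H(K)^{0}$ with finite (central) kernel onto the connected component of an affine algebraic group, i.e.\ $G$ \emph{covers} a linear group, which is literally the definition of almost linear and so reduces (i) to (ii). A finite central cover $\tilde G\to G$ with $\tilde G$ linear, as you wrote it, does not immediately make $G$ almost linear (a finite central quotient of a linear group is not obviously linear in this category), though one could still salvage the argument by producing a definable Levi subgroup upstairs and pushing it down, which is essentially what the paper does via Remark 2.9 of \cite{CCI}. State the fact in the correct direction and (i) is fine.
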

\begin{proof} (i) $G$ being affine Nash means that $G$ is definable in the $RCF$ language and with its unique structure as a Nash manifold has a Nash embedding in some $K^{n}$. See \cite{Hrushovski-Pillay}. In fact in the latter paper it is proved that $G$ is a finite cover of an ``algebraic group" (namely of $H(K)^{0}$ where $H$ is an algebraic group defined over $K$). Remark 2.9 of \cite{CCI} says that $H(K)^{0}$ has a definable Levi subgroup, and this lifts to $G$.
\newline
(ii) This is already part of Lemma 4.4 above.
\newline
(iii) The previous material shows that it suffices to look at central extensions $G$ of a semisimple group $P$, in which case by Lemma 3.2, $S = [G,G]$ is the ind-definable Levi subgroup. The induced surjective homomorphism $S\to P$ is, by Remark 2.10, a quotient of the $o$-minimal universal cover of $P$, so if $P$ had finite $o$-minimal fundamental group, then the kernel of $S\to P$ is finite whereby easily $S$ is definable.
\newline
(iv) If $P$ is definably compact then it has finite $o$-minimal fundamental group (as this corresponds to the usual fundamental group of the associated compact semisimple Lie group discussed at the end of section 3), so we can use (iii). But this case also follows from the results of \cite{HPP}.
\end{proof}


\begin{thebibliography}{99}

 

\bibitem{B-O} A. Berarducci and M. Otero, $O$-minimal fundamental group, homology and manifolds, J. London Math. Soc., 65 (2002), 257-270.


\bibitem{Conversano-thesis} A. Conversano, On the connections between definable groups in $o$-minimal structures and real Lie groups: the non-compact case. Ph. D. thesis, University of Siena 2009.

\bibitem{CCI} A. Conversano and A. Pillay, Connected components of definable groups, and $o$-minimality I,
preprint 2011.

\bibitem{CCII} A. Conversano and A. Pillay, Connected components of definable groups, and $o$-minimality II,
preprint 2011.

\bibitem{EdmundoI} M. Edmundo, Covers of groups definable in $o$-minimal structures, Illinois J. Math. 49 (2005), 99-120.

\bibitem{EdmundoII} M. Edmundo, Locally definable groups in $o$-minimal structures, J. Algebra 301 (2006), 194-223.

\bibitem{Edmundo-Pantelis} M. Edmundo and P. Eleftheriou, The universal covering homomorphism in $o$-minimal expansions of ordered groups, Math. Log. Quarterly, 53 (2007), 571-582.

\bibitem{HL} E. Hrushovski and F. Loeser, Nonarchimidean tame topology and stably dominated types, preprint 2011.

\bibitem{HPP}  E. Hrushovski, Y. Peterzil, and A. Pillay, On central extensions and definably compact groups in $o$-minimal structures, Journal of Algebra, 327 (2011), 71--106.

\bibitem{Hrushovski-Pillay}  E. Hrushovski and A. Pillay, Affine Nash groups over real closed fields, preprint 2011. 

\bibitem{OPP} M. Otero, Y. Peterzil and A. Pillay, Groups and rings definable in o-minimal expansions of real closed fields, Bull. London Math. Soc., 28 (1996), 7--14. 
 
\bibitem{PPSI} Y. Peterzil, A. Pillay and S. Starchenko, Definably simple groups in $o$-minimal structures, Transactions of the American Mathematical Society, 352 (2000), 4397--4419.
 
\bibitem{PPSIII} Y. Peterzil, A. Pillay, and S. Starchenko, Linear groups definable in $o$-minimal stuctures, J. Algebra, 247 (2002), 1--23.

\bibitem{Pillay} A. Pillay, An application of model theory to real and $p$-adic algebraic groups, J. Algebra, 126 (1989), 139-146.

\bibitem{Varadarajan}  V. S. Varadarajan, Lie groups, Lie Algebras, and their Representations, Prentice-Hall, 1974. 

 
\end{thebibliography}
\end{document}